\def\@abssec#1{\vspace{.05in}\footnotesize \parindent .2in
{\bf #1. }\ignorespaces}
\newtheorem{theorem}{Theorem}[section]
\newtheorem{lemma}[theorem]{Lemma}
\newtheorem{corollary}[theorem]{Corollary}
\def \Rm {\mathbb R}
\def \Zm {\mathbb Z}
\def\cH{\mathcal H}
\allowdisplaybreaks \numberwithin{equation}{section}
\title{Biomixing by chemotaxis and enhancement of biological reactions}
\author{Alexander Kiselev}
\thanks{Department of
Mathematics, University of Wisconsin, Madison, WI 53706, USA;
email: kiselev@math.wisc.edu}
\author{Lenya Ryzhik}
\thanks{Department of
Mathematics, Stanford University, Stanford, CA 94305; email:
ryzhik@math.stanford.edu}
\begin{document}


\begin{abstract}
Many phenomena in biology involve both reactions and chemotaxis.
These processes can clearly influence each other, and chemotaxis can play an important role in
sustaining and speeding up the reaction. However, to the best of our knowledge, the question of reaction
enhancement by chemotaxis has not yet
received extensive treatment either analytically or numerically.
We consider a model with a single density function involving diffusion,
advection, chemotaxis, and absorbing reaction. The model is motivated, in particular,
by studies of coral broadcast spawning, where experimental observations of the efficiency of fertilization
rates significantly exceed the data obtained from numerical models that do not take chemotaxis
(attraction of sperm gametes by a chemical secreted by egg gametes) into account.
We prove that  in the framework of our model, chemotaxis plays a crucial role.
There is a rigid limit to how much the fertilization efficiency can be enhanced if there is no
chemotaxis but only advection and diffusion. On the other hand, when chemotaxis is present,
the fertilization rate can be arbitrarily close to being complete provided that the chemotactic attraction is
sufficiently strong. Moreover, an interesting feature of the estimates on fertilization rate and timescales
in the chemotactic case is that they do not depend on the
amplitude of the reaction term.
\end{abstract}

\maketitle

\section{Introduction}\label{intro}

Our goal in this paper is to study the effect chemotactic attraction may have on reproduction processes in biology. A particular motivation for this study
comes from the phenomenon of broadcast spawning. Broadcast spawning is a fertilization
strategy used by various benthic invertebrates
(sea urchins, anemones, corals) whereby males and females release sperm and egg gametes into the
surrounding flow.
The gametes are positively buoyant, and rise to the surface of the ocean. The sperm and egg are initially separated by the ambient water, and effective mixing is necessary for successful fertilization. The fertilized gametes form larva, which is negatively buoyant and tries to attach to the bottom of the ocean floor to start a new colony.
For the coral spawning problem, field measurements of the fertilization rates are rarely below $5$\%, and are often as high as $90$\% \cite{Eck,Lasker,Penn,Yund}.
On the other hand, numerical simulations based on the
turbulent eddy diffusivity \cite{DShi} predict fertilization rates of less than $1$\% due to the strong dilution of gametes.
The turbulent eddy diffusivity approach involves two scalars that react and diffuse with the
effective diffusivity taking the presence of the flow into account.
It is well known, however, that the geometric structure of the fluid flow lost in the turbulent diffusivity approach
can be important for improving the reaction rate (in the physical and engineering literature see  \cite{Ottino,Ro,Y};
in the mathematical literature see  \cite{MS,CKOR,KR,FKR,KZ} for further references).
Recent work of Crimaldi, Hartford, Cadwell and Weiss \cite{Weiss1,Weiss2} employed a more sophisticated model, taking into account
the instantaneous details of the advective
transport not captured by the eddy diffusivity approach. These papers showed that vortex stirring can generally enhance the reaction rate, perhaps accounting for some of the
discrepancy between the numerical simulations and experiment.

However, there is also experimental evidence that chemotaxis plays a role in coral fertilization: eggs release a chemical that attracts sperm
\cite{Colletal1,Colletal2,Miller1,Miller2}. Mathematically, chemotaxis has been extensively studied in the context of modeling mold and bacterial colonies. Since the original work
of Patlak \cite{Patlak} and Keller-Segel \cite{KS1,KS2} where the first PDE model of chemotaxis was introduced, there has been an enormous amount of effort
devoted to the possible blow up and regularity of solutions, as well as the asymptotic behavior and other properties
(see \cite{Pert} for further references). However, we are not aware
of any rigorous or even computational work on the effects of chemotaxis for improved efficiency of biological reactions.

In this paper, we take the first step towards systematical study of this phenomenon, by analyzing rigorously a single partial differential equation
modeling the fertilization process:
\begin{equation}\label{chemo}
\partial_t \rho+u\cdot\nabla\rho =
\Delta \rho+ \chi \nabla (\rho \nabla(\Delta)^{-1}\rho)  -  \epsilon \rho^q, \,\,\,\rho(x,0)=\rho_0(x),~~x\in\Rm^d.
\end{equation}
Here, in the simplest approximation, we consider just one density, $\rho(x,t) \geq 0,$
corresponding to the assumption that  the densities of sperm and egg gametes are identical.
The vector field $u$ in \eqref{chemo} models the ambient ocean flow, is
divergence free, regular and prescribed,
independent of $\rho.$
The second term on the right is the standard chemotactic term,
in the same form as it appears in the (simplified) Keller-Segel equation (see \cite{Pert}). This term describes
the tendency of $\rho(x,t)$ to move along the gradient of the chemical whose distribution is equal to $-\Delta^{-1}\rho.$
This is an approximation to the full Keller-Segel system based on the assumption of chemical diffusion being much faster than diffusion of gamete densities.
The term $(-\epsilon\rho^q)$ models the reaction (fertilization). The parameter $\epsilon$ regulates the strength of the fertilization process. The value
of $\epsilon$ is small due to the fact that an egg gets fertilized only if a sperm attaches to a certain limited area on its surface (estimated to be about
1\% of the total egg surface in, for example, sea urchins eggs \cite{Vogel}).
We do not account for the product of the reaction -- fertilized eggs --
which drop out of the process.
We are interested in  the behavior of
\[
m_0(t)=\int_{\Rm^d}\rho(x,t)dx,
\]
which is the total fraction of the unfertilized eggs by time $t$.
It is easy to see that $m_0(t)$ is monotone decreasing. High efficiency fertilization corresponds
to $m_0(t)$ becoming small with time, as almost all egg gametes are fertilized.
We prove the following results.
\begin{theorem}\label{chem1}
Let $\rho(x,t)$ solve \eqref{chemo} with a divergence free $u(x,t) \in C^\infty(\Rm^d \times [0,\infty))$ and initial data $\rho_0 \geq 0 \in {\mathcal S}(\Rm^d)$
(the Schwartz class).
Assume that $qd>d+2$, 
and the chemotaxis is absent: $\chi=0.$ Then there exists a constant $\mu_0$ depending only on
$\epsilon,$ $q,$ $d$ and $\rho_0(x)$ but not on $u(x,t)$ such that
$m_0(t)\ge \mu_0$ for all $t\ge 0$.

Moreover, $\mu_0 \rightarrow m_0(0)$ as $\epsilon \rightarrow 0$ while $\rho_0, u$ and $q$ are fixed.
\end{theorem}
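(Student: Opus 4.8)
The plan is to reduce the theorem to a single differential inequality for $m_0(t)$ whose right-hand side is controlled uniformly in $u$, and then to integrate it. Since $u$ is divergence free and $\rho(\cdot,t)$ decays rapidly at infinity, integrating \eqref{chemo} with $\chi=0$ over $\Rm^d$ annihilates both the diffusion term ($\int\Delta\rho=0$) and the advection term ($\int u\cdot\nabla\rho=\int\nabla\cdot(u\rho)=0$), leaving
\begin{equation}\label{m0prime}
\frac{d}{dt}m_0(t) = -\epsilon\int_{\Rm^d}\rho^q(x,t)\,dx.
\end{equation}
Because $q>1$, the elementary bound $\int\rho^q\le\|\rho(t)\|_{L^\infty}^{q-1}\int\rho=\|\rho(t)\|_{L^\infty}^{q-1}m_0(t)$ turns \eqref{m0prime} into $\frac{d}{dt}m_0\ge-\epsilon\|\rho(t)\|_{L^\infty}^{q-1}m_0$, whence
\begin{equation}\label{expbound}
m_0(t)\ge m_0(0)\exp\Big(-\epsilon\int_0^\infty\|\rho(s)\|_{L^\infty}^{q-1}\,ds\Big).
\end{equation}
Everything then reduces to showing that $K:=\int_0^\infty\|\rho(s)\|_{L^\infty}^{q-1}\,ds$ is finite, with a value depending only on $d,q,\rho_0$ and \emph{not} on $u$.

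The crucial ingredient, and the point where the structure of the problem is used, is a decay estimate for $\|\rho(t)\|_{L^\infty}$ uniform in the advecting field. Since $-\epsilon\rho^q\le0$, $\rho$ is a subsolution of the pure advection--diffusion equation, so by the comparison principle $0\le\rho(x,t)\le\tilde\rho(x,t)$, where $\tilde\rho$ solves $\partial_t\tilde\rho+u\cdot\nabla\tilde\rho=\Delta\tilde\rho$ with $\tilde\rho(\cdot,0)=\rho_0$. The key observation is that a divergence-free drift drops out of every $L^p$ energy identity, because $\int u\cdot\nabla\tilde\rho\,\tilde\rho^{p-1}=\frac1p\int u\cdot\nabla(\tilde\rho^p)=-\frac1p\int(\nabla\cdot u)\,\tilde\rho^p=0$. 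Consequently Nash's inequality gives the same $L^1\to L^2$ decay as for the heat semigroup, and the semigroup property of the \emph{linear} equation (whose adjoint carries the drift $-u$, still divergence free) upgrades this to ultracontractivity with a purely dimensional constant:
\begin{equation}\label{nash}
\|\rho(t)\|_{L^\infty}\le\|\tilde\rho(t)\|_{L^\infty}\le C_d\,t^{-d/2}\|\rho_0\|_{L^1}=C_d\,t^{-d/2}m_0(0).
\end{equation}
Combined with the maximum principle $\|\rho(t)\|_{L^\infty}\le\|\rho_0\|_{L^\infty}$, this yields $\|\rho(t)\|_{L^\infty}\le\min\big(\|\rho_0\|_{L^\infty},\,C_d t^{-d/2}m_0(0)\big)$.

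It remains to check the finiteness of $K$, splitting at $T_0:=\big(C_d m_0(0)/\|\rho_0\|_{L^\infty}\big)^{2/d}$. On $[0,T_0]$ the first term in the minimum gives the finite contribution $\|\rho_0\|_{L^\infty}^{q-1}T_0$; on $[T_0,\infty)$ the decay bound \eqref{nash} contributes $(C_d m_0(0))^{q-1}\int_{T_0}^\infty s^{-d(q-1)/2}\,ds$, and this tail integral converges precisely when $d(q-1)/2>1$, i.e. $qd>d+2$ --- exactly the hypothesis of the theorem, and the only place it is used. Thus $K=K(d,q,\rho_0)<\infty$ is independent of $u$, and \eqref{expbound} gives $m_0(t)\ge\mu_0:=m_0(0)\exp(-\epsilon K)>0$ for all $t\ge0$, uniformly in $u$. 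Since $K$ does not depend on $\epsilon$, we also get $\mu_0\to m_0(0)$ as $\epsilon\to0$, which is the final assertion.

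I expect the only genuinely delicate step to be \eqref{nash}: one must be certain that the ultracontractivity constant is independent of $u$. The cleanest route is the comparison principle followed by Nash's argument for the divergence-free advection--diffusion semigroup (equivalently, an Aronson-type Gaussian upper bound for its kernel, whose constants depend only on $d$); a direct Nash iteration on $\rho$ also works for the $L^1\to L^2$ step, the reaction term only improving the dissipation, but the passage to $L^\infty$ is most transparent through the linear comparison. All remaining steps are routine.
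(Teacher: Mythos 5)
Your proof is correct, and it closes the argument by a genuinely different (and shorter) route than the paper. The key ingredient is shared: the $u$-independent decay bound $\|\rho(t)\|_{L^\infty}\le\|\tilde\rho(t)\|_{L^\infty}\le \min\bigl(\|\rho_0\|_{L^\infty},\,C_d t^{-d/2}\|\rho_0\|_{L^1}\bigr)$ for the linear advection--diffusion majorant, obtained from Nash's inequality together with the duality/semigroup step; this is precisely Lemma~\ref{nash}, and your sketch of why the constant is independent of $u$ matches the paper's proof. Where you diverge is in converting this into a lower bound on $m_0$. The paper controls the \emph{additive} loss $\|\rho(\cdot,t_0)\|_{L^1}-\|\rho(\cdot,t)\|_{L^1}$ by $\epsilon\int_{t_0}^\infty\!\int b^q$, with $b$ the linear solution re-initialized at time $t_0$ from $\rho(\cdot,t_0)$; this requires splitting the time integral at an optimized $\tau$, an auxiliary monotonicity result (Lemma~\ref{normcomp}, that $\|\rho(t)\|_{L^p}/\|\rho(t)\|_{L^1}$ is non-increasing) to control $\|\rho(\cdot,t_0)\|_{L^\infty}$ by $\|\rho(\cdot,t_0)\|_{L^1}$, and a concluding contradiction argument. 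You instead control the \emph{multiplicative} loss: $\int\rho^q\le\|\rho\|_{L^\infty}^{q-1}m_0$ plus Gr\"onwall gives $m_0(t)\ge m_0(0)e^{-\epsilon K}$ with $K=\int_0^\infty\|\rho(s)\|_{L^\infty}^{q-1}ds$, and finiteness of $K$ is exactly the hypothesis $qd>d+2$. This dispenses with Lemma~\ref{normcomp}, the re-initialization, the optimization in $\tau$, and the contradiction, and it makes the assertion $\mu_0\to m_0(0)$ as $\epsilon\to 0$ immediate. The only trade-off is quantitative: for large $\epsilon$ your $\mu_0=m_0(0)e^{-\epsilon K}$ is exponentially small in $\epsilon$, while the paper's argument yields a lower bound of order $\epsilon^{-1/(q-1)}$; since the theorem asserts only positivity of a $u$-independent $\mu_0$ and the small-$\epsilon$ limit, this is immaterial here.
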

\it Remarks. \rm 1. Observe that the constant $\mu_0$ does not depend on $u.$ No matter how strong the flow is or how it varies in time and space, it cannot
enhance the reaction rate beyond a certain definitive limit. \\
2. The condition $qd>d+2$ does not include the most natural case of $d=q=2.$ Dimension two corresponds to the surface of the ocean,
and $q=2$ corresponds to the product of egg and sperm densities. Our preliminary calculations show, however, that the
mathematics of $d=q=2$ case is different and more subtle. Then
the $L^1$ norm of $\rho$ for sufficiently rapidly decaying initial data
does go to zero but only very slowly in time.
The difference between chemotactic and chemotactic-free equation \eqref{chemo} in this case is likely to manifest itself
in the time scales of the fertilization process:
in the presence of chemotaxis the $L^1$ norm initially decays much faster. We will
address this issue in a separate publication, to keep the present
paper as transparent as possible. \\
3. The case of small $\epsilon$ is interesting due to experimental relevance. It will be clear from the proof that decrease of the $L^1$
norm, $m_0(0) - \lim_{t \rightarrow \infty}m_0(t)$ is of the order $\epsilon$ when chemotaxis is absent. \\
4. The condition that $\rho_0 \in {\mathcal S}$ can of course be weakened. What we need is the initial data that is decaying sufficiently quickly and is
minimally regular. Similarly, the condition that $u$ is smooth can be weakened to, say, $C^1$ without much difficulty. We decided not to pursue
most optimal regularity conditions on $\rho_0$ and $u$ in this paper to simplify presentation. \\
5. By $u \in C^\infty(\Rm \times [0,\infty))$ we mean that bounds on every derivative of $u$ are uniform over all $(x,t) \in \Rm^d \times [0,T],$ for every $T>0.$

On the other hand, in the presence of chemotaxis, we have
\begin{theorem}\label{chem2}
Let $\rho(x,t)$ solve \eqref{chemo} with a
divergence free $u(x,t) \in C^\infty(\Rm^d \times [0,\infty))$ and fixed initial data $\rho_0\geq 0 \in {\mathcal S}.$
Assume that $d=2,$ and $q$ is a positive integer greater than $2.$ Then we have that $m_0(t)\rightarrow c(\chi,\rho_0,u)>0$
as $t \rightarrow \infty,$
but $c(\chi,\rho_0,u)\to  0$
as $\chi \rightarrow \infty$, with $q$, $\rho_0$ and $u$ fixed.
\end{theorem}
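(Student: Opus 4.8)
The two assertions are driven by different mechanisms, but both start from the elementary observation that mass is monotone. Integrating \eqref{chemo} over $\Rm^2$ and using that $u$ is divergence free and that the diffusive and chemotactic terms are exact divergences, one gets
\begin{equation}
\frac{d}{dt}m_0(t) = -\epsilon\int_{\Rm^2}\rho^q(x,t)\,dx \le 0,
\end{equation}
so $m_0$ decreases to a limit $c=c(\chi,\rho_0,u)\ge 0$, and $m_0(0)-c=\epsilon\int_0^\infty\!\int\rho^q\,dx\,dt$. Everything reduces to controlling the space-time integral of $\rho^q$: making it small forces $c$ close to $m_0(0)$, while making it nearly exhaust $m_0(0)/\epsilon$ forces $c$ close to $0$.

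For the first assertion ($c>0$ at fixed $\chi$) the plan is to establish a decay estimate $\|\rho(\cdot,t)\|_{L^\infty}\le C(1+t)^{-1}$, with $C$ allowed to depend on $\chi,\epsilon,q,\rho_0$ and $u$, and then run Gronwall on $m_0$: since $\int\rho^q\le\|\rho\|_{L^\infty}^{q-1}m_0$, one has $m_0'\ge-\epsilon\|\rho\|_{L^\infty}^{q-1}m_0$, hence
\begin{equation}
c\ \ge\ m_0(0)\exp\Big(-\epsilon\int_0^\infty\|\rho(\cdot,t)\|_{L^\infty}^{q-1}\,dt\Big)\ >\ 0,
\end{equation}
the integral being finite precisely because $q>2$ makes $\int_0^\infty(1+t)^{-(q-1)}dt$ converge (this is exactly where the borderline case $d=q=2$, with its divergent $\int t^{-1}$, would fail). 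The decay estimate itself would come from the $L^p$ energy identities: testing \eqref{chemo} against $\rho^{p-1}$ gives
\begin{equation}
\frac{1}{p}\frac{d}{dt}\|\rho\|_{L^p}^p=-\frac{4(p-1)}{p^2}\int|\nabla\rho^{p/2}|^2+\frac{\chi(p-1)}{p}\int\rho^{p+1}-\epsilon\int\rho^{p+q-1},
\end{equation}
and since $q>2$ we have $p+q-1>p+1$, so by H\"older and Young's inequality the chemotactic term is absorbed: $\chi(p-1)\int\rho^{p+1}\le\tfrac{\epsilon p}{2}\int\rho^{p+q-1}+C(\chi,\epsilon,p,q)\,m_0(t)$. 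This simultaneously yields global regularity (the reaction dominates aggregation at high density, ruling out blow-up) and, combined with the two-dimensional Nash inequality applied to $\rho^{p/2}$ in a Moser iteration, the claimed $(1+t)^{-1}$ decay of $\|\rho\|_{L^\infty}$.

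The second assertion is proved by a virial (second moment) argument that is completely insensitive to $\epsilon$. Let $N(t)=\int_{\Rm^2}|x|^2\rho\,dx$. The classical Keller--Segel computation gives, in $d=2$,
\begin{equation}
N'(t)=4\,m_0(t)+2\int x\cdot u\,\rho\,dx-\frac{\chi}{2\pi}m_0(t)^2-\epsilon\int|x|^2\rho^q\,dx,
\end{equation}
where the term $-\tfrac{\chi}{2\pi}m_0^2$ arises from symmetrizing the singular kernel $\nabla\Delta^{-1}\rho$. Dropping the nonpositive reaction term and bounding the advection contribution by Cauchy--Schwarz, $|2\int x\cdot u\,\rho|\le 2\|u\|_{L^\infty}N^{1/2}m_0^{1/2}$, gives $N'(t)\le 4m_0(t)+2\|u\|_{L^\infty}N^{1/2}m_0^{1/2}-\tfrac{\chi}{2\pi}m_0(t)^2$. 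Now fix a target $\delta>0$ and argue by contradiction: if $m_0(t)>\delta$ on a time interval, then as long as $N\le N(0)$ one has $N'(t)\le 4m_0(0)+2\|u\|_{L^\infty}N(0)^{1/2}m_0(0)^{1/2}-\tfrac{\chi}{2\pi}\delta^2\le-1$ once $\chi$ exceeds a threshold depending only on $\delta,m_0(0),N(0)$ and $\|u\|_{L^\infty}$, and \emph{not} on $\epsilon$. Then $N$ is forced strictly below $N(0)$ and, continuing, down to $0$ by time $t=N(0)$, contradicting $N\ge0$. Hence for such $\chi$ the mass must reach $\delta$ in finite time, and by monotonicity $c\le\delta$; letting $\delta\to0$ (so $\chi\to\infty$) yields $c(\chi,\rho_0,u)\to0$.

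The main obstacle is the $L^\infty$ decay estimate underlying the first assertion. In two dimensions chemotaxis alone can concentrate mass and blow up, so the whole point is to show that the absorbing reaction, which outscales the aggregation because $q>2$, not only prevents blow-up but preserves the diffusive $t^{-1}$ rate; turning the $L^p$ identities into a genuine \emph{time-decay} of $\|\rho\|_{L^\infty}$ (rather than merely a finite-time bound) through the Moser iteration is the delicate point. By contrast, the virial argument for the second assertion is robust and, pleasingly, independent of the reaction strength $\epsilon$.
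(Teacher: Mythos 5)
Your treatment of the second assertion ($c\to 0$ as $\chi\to\infty$) is essentially the paper's argument: the same second-moment identity, the same symmetrization of the chemotactic kernel producing the $-\chi\,m_0^2$ term (the paper's normalization of $\Delta^{-1}$ drops your $1/(2\pi)$, which is immaterial), and a contradiction from $N(t)$ being driven negative. The paper handles the advection term by Young's inequality with a weight $\chi^{\pm\beta}$ and optimizes $\beta=1/3$ to get the quantitative rate $c\le C(u,m_2)\chi^{-2/3}$, whereas your Cauchy--Schwarz bound gives only the qualitative conclusion; both suffice for Theorem \ref{chem2}. The only thing you elide is that the integrations by parts in the virial identity need the global regularity and moment control of Theorem \ref{globex}, but that is a presentational matter.

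The first assertion ($c>0$ for fixed $\chi$) has a genuine gap. Your plan rests on the unconditional decay $\|\rho(\cdot,t)\|_{L^\infty}\le C(1+t)^{-1}$, and the mechanism you propose cannot deliver it. Absorbing the chemotactic term by Young's inequality, $\chi(p-1)\rho^{p+1}\le \tfrac{\epsilon p}{2}\rho^{p+q-1}+C(\chi,\epsilon,p,q)\rho$, leaves the non-decaying source $C\,m_0(t)$ in the $L^p$ energy identity; since $m_0(t)\ge c>0$ for all time (this is exactly what you are trying to prove), the resulting differential inequality has the form $y'\le -c\,y^{1+\alpha}+C$ with $C>0$, whose solutions are eventually \emph{bounded} by the equilibrium value but do not decay to zero. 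A Moser iteration built on this yields a uniform-in-time $L^\infty$ bound, not the $t^{-1}$ rate, and without decay the integral $\int_0^\infty\|\rho\|_{L^\infty}^{q-1}\,dt$ in your Gronwall bound need not converge. Indeed it is doubtful that unconditional $t^{-1}$ decay holds when $\chi\,m_0$ stays above the aggregation threshold. The paper sidesteps this entirely by arguing by contradiction: \emph{assume} $m_0(t)\to 0$; then at some $t_0$ the quantity $C(q)\chi\,m_0(t_0)$ drops below $2$, and at that point the Gagliardo--Nirenberg inequality $\int\rho^{q+1}\le C(q)\int|\nabla\rho^{q/2}|^2\int\rho$ lets the diffusive dissipation absorb the chemotactic production \emph{with no leftover source}, giving
\[
\partial_t\int\rho^q\le -C(q)\Bigl(\int\rho^q\Bigr)^{1+\frac{1}{q-1}}\Bigl(\int\rho\Bigr)^{-\frac{q}{q-1}},
\]
hence the heat-equation decay $\int\rho^q\lesssim (t-t_0)^{-(q-1)}$; since $q>2$ this is integrable in time, and the bookkeeping of Theorem \ref{chem1} (together with the $L^\infty$ bound of Lemma \ref{linfty}) shows the mass can never fall below half its value at $t_0$ --- contradicting the assumption. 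You should restructure your first part along these conditional lines: you only ever need decay of $\|\rho\|_{L^q}$ in the regime where $\chi\int\rho$ is already small, and that regime is reached only under the hypothesis you are refuting.
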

\it Remarks. \rm
1. In general, chemotaxis can lead to finite time blow up of the solution (see \cite{Pert} for references).
However, it is known that the presence of the reaction can lead
to global regularity (see \cite{Wink1} for a slightly different model where references to some earlier works can also be found). We will
sketch the global regularity proof for solutions of the equation \eqref{chemo} in Appendix I.   \\
2. We prove more (see Theorem~\ref{chemore}). Here we stated the result in the simplest form to avoid technicalities.\\
3. An interesting feature of Theorem~\ref{chem2} is that $c(\chi,\rho_0,u) \rightarrow 0$ uniformly in reaction strength $\epsilon.$ The timescale
to achieve most of decay also does not depend on $\epsilon$ (Theorem~\ref{chemore}). The mechanism of this effect will be clear from the
proof. 
Roughly speaking, what happens is the solution tries to blow up to a $\delta$ function profile. The reaction term, however, prevents blow up from happening.
However, the solution gets larger if $\epsilon$ is small. Chemotaxis and reaction balance in a way to produce same order reaction effect independently of
$\epsilon.$ \\
4. The case $d>2$ is mathematically different and it is not clear that the $L^1$ norm may become arbitrarily small in this case even
with strong chemotaxis aid. There appears to be a genuine mathematical reason why coral gametes rise to the surface instead of trying to find each other in the three-dimensional ocean!

Hence our model implies that the chemotactic term, as opposed to the flow and diffusion alone, can
account for highly efficient fertilization rates that are observed
in nature. Moreover, Theorems~\ref{chem1} and \ref{chem2} suggest that the presence of chemotaxis may be a necessary and crucial aspect of the fertilization process.
Of course, a more realistic model of the process is a system of equations involving two different densities.
We will show that even for the system case, the flow
can only have a limited effect on fertilization efficiency, similarly to our simple model. It is possible that in the system case the flow and chemotaxis can play supplementary
role, with flow acting on larger and chemotaxis on smaller length scales. Note that one can expect that in the system setting the chemotaxis effect is weaker,
since it only appears in the equation for the sperm density.
The influence of chemotaxis in the system setting, and investigation
of quadratic reaction term are left for a later study.

\section{The reaction-advection-diffusion case}\label{rad}

In this section, we prove Theorem~\ref{chem1}. Consider equation \eqref{chemo} with $\chi=0:$
\begin{equation}\label{har}
\partial_t \rho +u\cdot\nabla\rho=
\Delta \rho   - \epsilon \rho^q, \,\,\,\rho(x,0)=\rho_0(x).
\end{equation}
As the first step, observe that by comparison principle, $\rho(x,t) \leq b(x,t),$ where
\begin{equation}\label{heat}
\partial_t b +u\cdot\nabla b= \Delta b, \,\,\,b(x,0)=\rho_0(x).
\end{equation}
Also, note that since $\rho(x,t) \geq 0,$
\[ \partial_t \|\rho(\cdot,t)\|_{L^1} = \partial_t \int_{\Rm^d} \rho(x,t)\,dx = - \epsilon \int_{\Rm^d} \rho^q(x,t)\,dx \geq - \epsilon \int_{\Rm^d} b^q(x,t)\,dx. \]
Therefore, the behavior of the $L^q$ norm of $b$ can be used for estimating decay of the $L^1$ norm of $\rho.$
We have the following lemma, similar in spirit (and proof)
to~Lemma 3.1 of~\cite{FKR}.
\begin{lemma}\label{nash}
There exists $C=C(d)$ that, in particular, does not depend on the flow $u$,
such that
\begin{equation}\label{decb}
\|b(\cdot,t)\|_{L^2} \leq {\rm min}(\|b_0\|_{L^2},Ct^{-d/4}\|b_0\|_{L^1}), \,\,\, \|b(\cdot ,t)\|_{L^\infty} \leq {\rm min}(\|b_0(x)\|_{L^\infty},Ct^{-d/2}\|b_0\|_{L^1}).
\end{equation}
\end{lemma}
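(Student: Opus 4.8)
The plan is to exploit the divergence-free condition on $u$ to reduce everything to heat-equation-style estimates, the engine being the Nash inequality. First I would record the two elementary bounds. The bound $\|b(\cdot,t)\|_{L^\infty}\le\|b_0\|_{L^\infty}$ is the maximum principle for \eqref{heat}. For the $L^2$ bound, multiplying \eqref{heat} by $b$ and integrating, the advection term drops out since $\int_{\Rm^d}(u\cdot\nabla b)\,b\,dx=\tfrac12\int_{\Rm^d}u\cdot\nabla(b^2)\,dx=-\tfrac12\int_{\Rm^d}(\nabla\cdot u)\,b^2\,dx=0$, leaving $\tfrac{d}{dt}\|b\|_{L^2}^2=-2\|\nabla b\|_{L^2}^2\le 0$. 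The same cancellation, together with $\int_{\Rm^d}\Delta b\,dx=0$ and the maximum principle $b\ge 0$, gives conservation of mass: $\|b(\cdot,t)\|_{L^1}=\|b_0\|_{L^1}$ for all $t$.

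Next I would turn the energy identity into decay using the Nash inequality $\|f\|_{L^2}^{1+2/d}\le C_N\|\nabla f\|_{L^2}\|f\|_{L^1}^{2/d}$. Writing $y(t)=\|b(\cdot,t)\|_{L^2}^2$ and inserting mass conservation, the energy identity becomes the differential inequality $y'\le -2C_N^{-2}\|b_0\|_{L^1}^{-4/d}\,y^{1+2/d}$. Integrating this (equivalently, differentiating $y^{-2/d}$) yields $y(t)\le C\|b_0\|_{L^1}^2\,t^{-d/2}$, i.e. the claimed $\|b(\cdot,t)\|_{L^2}\le Ct^{-d/4}\|b_0\|_{L^1}$; combined with the monotonicity bound $y(t)\le\|b_0\|_{L^2}^2$ this gives the first estimate in \eqref{decb}.

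For the $L^\infty$ estimate I would argue by duality together with the evolution (propagator) property, which is where the time-dependence of $u$ requires care: since $u$ depends on $t$ we do not have a semigroup but a two-parameter family $S(t,s)$ solving \eqref{heat} from time $s$ to time $t$. The $L^1\to L^2$ bound just proved holds for $S(t,s)$ with $t-s$ in place of $t$, since its proof used only the divergence-free condition, mass conservation and the Nash inequality, none of which single out the initial time. To bound $S(t,0)\colon L^2\to L^\infty$, I would test $S(t,0)f$ against $g\in L^1$ and move the evolution onto $g$; the resulting backward adjoint equation, after the time reversal $\tau=t-s$, becomes a forward advection-diffusion equation with the still divergence-free field $\tilde u(\tau)=u(t-\tau)$, so the same $L^1\to L^2$ estimate applies to it, producing $\|S(t,0)f\|_{L^\infty}\le Ct^{-d/4}\|f\|_{L^2}$. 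Finally, splitting $S(t,0)=S(t,t/2)S(t/2,0)$ and chaining the $L^1\to L^2$ and $L^2\to L^\infty$ bounds gives $\|b(\cdot,t)\|_{L^\infty}\le Ct^{-d/2}\|b_0\|_{L^1}$, which together with the maximum principle yields the second estimate in \eqref{decb}. The main obstacle is precisely this duality step under a time-dependent drift: one must check that reversing time preserves the divergence-free structure, so that the adjoint evolution enjoys the same Nash-driven decay — everything else is the standard Nash argument.
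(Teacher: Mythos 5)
Your proposal is correct and follows essentially the same route as the paper: the Nash inequality combined with the energy identity and $L^1$ conservation gives the $L^1\to L^2$ decay, and the $L^\infty$ bound is obtained by the same duality argument against the time-reversed (still divergence-free) adjoint evolution, followed by splitting the propagator at $t/2$. Your extra care about the two-parameter propagator $S(t,s)$ and the preservation of the divergence-free structure under time reversal is exactly the point the paper handles implicitly with its auxiliary solution $\theta(x,s)$.
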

\begin{proof}
By Nash inequality \cite{Nash}, we have
\[ \|b\|_{L^2}^{1+\frac2d} \leq C(d)\|b\|_{L^1}^{2/d} \|\nabla b\|_{L^2}. \]
Multiplying \eqref{heat} by $b,$ integrating,
and using incompressibility of $u,$ we get
\[ \frac12 \partial_t \|b\|_{L^2}^2 = -\|\nabla b\|_{L^2}^2  \leq -C\frac{\|b\|_{L^2}^{2+\frac{4}{d}}}{\|b\|_{L^1}^{\frac{4}{d}}}=
-C\frac{\|b\|_{L^2}^{2+\frac{4}{d}}}{\|b_0\|_{L^1}^{\frac{4}{d}}}. \]
We used the conservation of the $L^1$-norm of $b$ in the last step. Set $z(t) = \|b(\cdot,t)\|_{L^2}^2.$ Then
\[ z'(t) \leq - Cz(t)^{1+\frac{2}{d}}\|b_0\|_{L^1}^{-\frac{4}{d}}. \]
Solving this differential inequality, we get
\[ z(t) \leq \left(\frac{2Ct}{d\|b_0\|_{L^1}^{4/d}}+\frac{1}{\|b_0\|_{L^2}^{4/d}}\right)^{-d/2}, \]
implying
\[ \|b(\cdot, t)\|_{L^2}^2 \leq {\rm min}\left(\|b_0\|^2_{L^2}, C(d)t^{-d/2}\|b_0\|^2_{L^1} \right). \]
This gives the first inequality in \eqref{decb}.

The second inequality in \eqref{decb} follows from a simple duality argument using incompressibility of $u.$
Indeed, consider $\theta(x,s),$ a solution of
\[
\partial_s \theta +u(x,t-s)\cdot \nabla \theta =
\Delta \theta, \,\,\, \theta(x,0)=\theta_0(x) \in {\mathcal S}.
\]
A direct calculation shows that
\[ \frac{d}{ds} \int_{\Rm^d} b(x,s) \theta(x,t-s)\,dx =0. \]
When $s=t,$ we get
\[ \left| \int_{\Rm^d} b(x,t) \theta_0(x)\,dx \right| \leq \|b(x,t)\|_{L^2} \|\theta_0\|_{L^2} \leq C(d)t^{-d/4}\|b_0\|_{L^1}\|\theta_0\|_{L^2}. \]
For $s=0,$ this implies
\[ \left| \int_{\Rm^d} b_0(x) \theta(x,t)\,dx \right| \leq C(d)t^{-d/4}\|b_0\|_{L^1}\|\theta_0\|_{L^2} \]
for every $b_0,\theta_0 \in {\mathcal S}.$ Hence
\begin{equation}\label{thcon14}
\|\theta(x,t)\|_{L^\infty} \leq C(d)t^{-d/4}\|\theta_0\|_{L^2}
\end{equation}
 for every $\theta_0 \in L^2.$
To finish the proof of the Lemma, given $t>0,$ note that
\[ \|b(x,t)\|_{L^\infty} \leq C(d)(t/2)^{-d/4} \|b(x,t/2)\|_{L^2} \leq C(d)t^{-d/2}\|b_0\|_{L^1}. \]
Here in the second step we used \eqref{thcon14} and adjusted $C(d).$

\end{proof}

For a more precise estimate on the residual mass
$\mu_0,$ we need one more lemma.
\begin{lemma}\label{normcomp}
Assume that $\rho(x,t)$ solves \eqref{har} with a
smooth bounded incompressible $u$ and $\rho_0 \in {\mathcal S}.$ Then for every $t>0$
we have
\[ \frac{\|\rho(x,t)\|_{L^p}}{\|\rho(x,t)\|_{L^1}} \leq \frac{\|\rho_0\|_{L^p}}{\|\rho_0\|_{L^1}}\,\,\,
{\rm for}\,\,\,{\rm all}\,\,\,1 \leq p \leq \infty.\]
\end{lemma}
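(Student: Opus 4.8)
The plan is to fix $1<p<\infty$ and show that the quantity
\[ F(t)=\frac{\|\rho(\cdot,t)\|_{L^p}^p}{\|\rho(\cdot,t)\|_{L^1}^p}=\frac{\int_{\Rm^d}\rho^p\,dx}{\left(\int_{\Rm^d}\rho\,dx\right)^p} \]
is non-increasing in $t$; taking $p$-th roots then gives the claim for every finite $p>1$. The case $p=1$ is trivial, and $p=\infty$ follows by letting $p\to\infty$, using that $\|\rho(\cdot,t)\|_{L^p}\to\|\rho(\cdot,t)\|_{L^\infty}$ for the smooth, rapidly decaying solution $\rho$. Writing $A(t)=\int_{\Rm^d}\rho^p\,dx$ and $M(t)=\int_{\Rm^d}\rho\,dx$ (both positive on every finite interval, since $\rho\ge0$ by the maximum principle and $M(t)\ge M(0)\exp(-\epsilon t\|\rho_0\|_{L^\infty}^{q-1})>0$ by the $L^\infty$ bound of Lemma~\ref{nash}), monotonicity of $F=A/M^p$ is equivalent to $A'(t)M(t)\le p\,A(t)M'(t)$.

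First I would differentiate. Testing \eqref{har} against $p\rho^{p-1}$ and against $1$, using $\nabla\cdot u=0$ to kill the advection terms ($\int u\cdot\nabla(\rho^p)=0$ and $\int u\cdot\nabla\rho=0$) and integrating by parts on the Laplacian, gives
\[ A'(t)=-\frac{4(p-1)}{p}\int_{\Rm^d}\big|\nabla(\rho^{p/2})\big|^2\,dx-\epsilon p\int_{\Rm^d}\rho^{p+q-1}\,dx,\qquad M'(t)=-\epsilon\int_{\Rm^d}\rho^q\,dx. \]
Substituting, the diffusion contribution to $A'M-pAM'$ is $-\tfrac{4(p-1)}{p}\big(\int|\nabla\rho^{p/2}|^2\big)\big(\int\rho\big)\le0$ because $p>1$, so it only helps. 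Hence it remains to prove the correlation inequality
\[ \left(\int_{\Rm^d}\rho^p\,dx\right)\left(\int_{\Rm^d}\rho^q\,dx\right)\le\left(\int_{\Rm^d}\rho^{p+q-1}\,dx\right)\left(\int_{\Rm^d}\rho\,dx\right). \]

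The core of the argument is this last inequality, which I would establish pointwise after symmetrization. Writing each product as a double integral over $(x,y)$ and symmetrizing in $x\leftrightarrow y$, the difference of the two sides has integrand
\[ \tfrac12\big(a^{p+q-1}b+ab^{p+q-1}-a^pb^q-a^qb^p\big)=\tfrac12\,ab\,(a^{p-1}-b^{p-1})(a^{q-1}-b^{q-1}), \]
where $a=\rho(x)\ge0$ and $b=\rho(y)\ge0$. Since $p,q\ge1$, both $t\mapsto t^{p-1}$ and $t\mapsto t^{q-1}$ are non-decreasing on $[0,\infty)$, so the two factors share a sign and their product is non-negative; integrating proves the inequality. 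Together with the non-positive diffusion term this yields $A'M-pAM'\le0$, hence $F'\le0$, as desired.

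The only real subtlety—and the step I expect to require the most care—is this algebraic correlation inequality: recognizing that the two reaction terms combine, after symmetrization, into the perfectly signed product $ab(a^{p-1}-b^{p-1})(a^{q-1}-b^{q-1})$ is exactly what makes the estimate close, and it is precisely here that $p,q\ge1$ is used. The remaining points are routine: differentiation under the integral and the integration by parts are justified by the smoothness and Schwartz-type decay of $\rho$ (for $1<p<2$ one may first regularize by replacing $\rho$ with $\rho+\delta$ and let $\delta\to0$, or simply restrict to $p\ge2$ and recover $p=\infty$ by the limiting argument above, interpolating for intermediate $p$), and the positivity of $M(t)$ on finite intervals is immediate from the bound on $M'(t)$ noted at the outset.
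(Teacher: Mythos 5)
Your proposal is correct and follows essentially the same route as the paper: differentiate $\int\rho^p/(\int\rho)^p$, observe that the diffusion contribution is non-positive, and reduce everything to the correlation inequality $\int\rho^p\,dx\int\rho^q\,dx\le\int\rho^{p+q-1}\,dx\int\rho\,dx$. The only (cosmetic) difference is that you prove this last inequality by symmetrization via the pointwise identity $ab\,(a^{p-1}-b^{p-1})(a^{q-1}-b^{q-1})\ge0$ (a Chebyshev-type argument), whereas the paper obtains it directly from H\"older's inequality; both are one-line justifications of the same step.
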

\begin{proof}
For $p=1$ the result is immediate.
Consider some $1<p<\infty,$ and look at
\begin{eqnarray*}
&&\frac{\partial}{\partial t}
\left(\frac{\int_{\Rm^d} \rho^p\,dx}{\left(\int_{\Rm^d}\rho\,dx\right)^p}\right) =
p\left(\int_{\Rm^d} \rho\,dx \right)^{-p-1}\\
&&\times\left[
{\int_{\Rm^d}
\rho^{p-1}(-u \cdot \nabla\rho + \Delta \rho - \epsilon \rho^q)\,dx \int_{\Rm^d} \rho\,dx
- \int_{\Rm^d}
\rho^p\,dx \int_{\Rm^d} (-u\cdot \nabla\rho +\Delta \rho -\epsilon \rho^q )\,dx}\right]
\end{eqnarray*}
Consider the term in the second line above,
which after integration by parts simplifies to
\[
\left( -(p-1)\int_{\Rm^d} \rho^{p-2}|\nabla\rho|^2\,dx -
\epsilon \int_{\Rm^d} \rho^{q+p-1}\,dx \right)\int_{\Rm^d}\rho\,dx+
\epsilon \int_{\Rm^d}\rho^p\,dx \int_{\Rm^d}\rho^q\,dx.
\]
This does not exceed
\[ -\epsilon \int_{\Rm^d} \rho^{q+p-1}\,dx \int_{\Rm^d}\rho\,dx+
\epsilon \int_{\Rm^d}\rho^p\,dx \int_{\Rm^d}\rho^q\,dx, \]
which is less than or equal to zero by an
application of H\"older's inequality.

The $p=\infty$ case follows by a limiting procedure since $\rho(x,t) \in {\mathcal S}$
for all $t$.
\end{proof}
We are ready to prove Theorem~\ref{chem1}.
\begin{proof}[Proof of Theorem~\ref{chem1}]
The idea of the proof is very simple. We will show that if $L^1$-norm of $\rho$
at some time $t_0$ is sufficiently small then for all times $t>t_0$ the $L^1$-norm
of $\rho(x,t)$ can not drop below $\|\rho(t_0)\|_{L^1}/2$. This shows that
$\rho(x,t)$ can not tend to zero as $t\to +\infty$.

Recall that for every $t,$
\[ \partial_t \int_{\Rm^d} \rho(x,t)\,dx = -\epsilon\int_{\Rm^d} \rho(x,t)^q \,dx \geq -\epsilon\int_{\Rm^d} b(x,t)^q\,dx, \]
where $b$ is given by \eqref{heat}. By Lemma~\ref{nash} and H\"older's inequality,
\[ \int_{\Rm^d} b(x,t)^q \,dx \leq C {\rm min} \left(\|\rho_0\|_{L^q}^q,t^{-\frac{d(q-1)}{2}}\|\rho_0\|^q_{L^1}\right). \]
Thus, for every $\tau>0,$
\begin{eqnarray}\nonumber
\int_{t_0}^\infty dt \int_{\Rm^d} b(x,t)^q \,dx \leq C(d)\left( \|\rho(\cdot,t_0)\|_{L^q}^q \tau + \|\rho(\cdot,t_0)\|_{L^1}^q\int_{t_0+\tau}^\infty (t-t_0)^{-\frac{d(q-1)}{2}}\,dt \right)\\
\label{keyb11}\leq C(d,q)\left( \|\rho(\cdot,t_0)\|_{L^\infty}^{q-1}\|\rho(\cdot,t_0)\|_{L^1}\tau + \|\rho(\cdot, t_0)\|_{L^1}^q \tau^{\frac{d+2-qd}{2}}\right).
\end{eqnarray}
We used the assumption $qd>d+2$ when evaluating integral in time.

Assume, on the contrary, that the $L^1$ norm of $\rho$ does go to zero for some $u.$
Consider some time $t_0>0$ when $\|\rho(\cdot, t_0)\|_{L^1}$ is sufficiently small (we'll have a precise bound later). Using Lemma~\ref{normcomp} and \eqref{keyb11},
we see that further decrease of the $L^1$ norm
from that level is bounded by
\begin{equation}\label{kb2}
\|\rho(\cdot,t_0)\|_{L^1} - \|\rho(\cdot,t)\|_{L^1}
\le C(d,q) \epsilon \left( \frac{\|\rho_0\|^{q-1}_{L^\infty}}{\|\rho_0\|^{q-1}_{L^1}}\|\rho(\cdot,t_0)\|_{L^1}^q\tau + \|\rho(\cdot,t_0)\|_{L^1}^q \tau^{\frac{d+2-qd}{2}}\right),
\end{equation}
for all $t>t_0,$ $\tau>0$.
Choosing $\tau$ to minimize the expression \eqref{kb2}, we find that for every $t>t_0,$
\begin{equation}\label{l1con2}
\|\rho(\cdot,t_0)\|_{L^1} - \|\rho(\cdot,t)\|_{L^1} \leq C(q,d)\epsilon \|\rho(\cdot,t_0)\|_{L^1}^q \left(\frac{\|\rho_0\|_{L^\infty}}{\|\rho_0\|_{L^1}}\right)^{\frac{qd-d-2}{d}}.
\end{equation}
If $\|\rho(\cdot, t)\|_{L^1}\to 0$ as $t\to +\infty$, we may choose $t_0$ so that
\begin{equation}\label{rhoepscon} C(q,d) \epsilon \|\rho(\cdot,t_0)\|_{L^1}^{q-1} \left(\frac{\|\rho_0\|_{L^\infty}}{\|\rho_0\|_{L^1}}\right)^{\frac{qd-d-2}{d}} \leq \frac12. \end{equation}
Then we get that
\[ \|\rho(\cdot,t)\|_{L^1} \geq \frac12 \|\rho(\cdot,t_0)\|_{L^1} \geq \mu_0(q,d,\rho_0) \equiv {\rm min}\left(\frac12 \|\rho_0\|_{L^1},\frac{1}{2^{\frac{q}{q-1}}\epsilon^{\frac{1}{q-1}} C(q,d)^{\frac{1}{q-1}}}
\left( \frac{\|\rho_0\|_{L^1}}{\|\rho_0\|_{L^\infty}}\right)^{1-\frac{2}{d(q-1)}}\right)\]
for every $t>t_0.$ This is a contradiction to the assumption that
$\|\rho(t)\|_{L^1}\to 0$ as $t\to+\infty$.
This argument can also be used to define $\mu_0$ in the statement of the theorem.
The last statement of the theorem is easy to prove by changing the condition $\leq \frac12$ in \eqref{rhoepscon} to $\leq \kappa$ where $\kappa$ can be taken as small
as desired.
\end{proof}

\section{The reaction-advection-diffusion case: a system}\label{radsys}

In this section we show that the results of Section~\ref{rad} largely extend to a more general model.
Consider the following system
\begin{eqnarray}\label{seq}
\partial_t s = (u \cdot \nabla)s +\kappa_1\Delta s - \epsilon (se)^{q/2},\,\,\,s(x,0)=s_0(x) \\
\label{eeq}
\partial_t e = (u \cdot \nabla)e +\kappa_2\Delta e - \epsilon (se)^{q/2}, \,\,\,e(x,0)=e_0(x).
\end{eqnarray}
Here $s(x,t)$ and $e(x,t)$ are sperm and egg densities respectively. The following analog of Theorem~\ref{chem1} holds.
\begin{theorem}\label{chem1sys}
Let $s(x,t),e(x,t)$ solve \eqref{seq},\eqref{eeq} with divergence free $u(x,t) \in C^\infty(\Rm^d \times [0,\infty))$ and initial data $s_0,e_0 \in S$.
Assume that $qd>d+2,$ $q>2$ and the chemotaxis is absent: $\chi=0.$ Then there exists a constant $\mu_1$ depending only on
$\epsilon,$ $q,$ $d$ and $e_0(x),s_0(x)$ such that
the $L^1$ norms of $s(x,t)$ and $e(x,t)$ remain greater than $\mu_1$ for all times.
\end{theorem}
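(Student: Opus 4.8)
The plan is to follow the proof of Theorem~\ref{chem1}, modified to handle the coupling of the two densities. The first and most useful observation is that the system enjoys an exact conservation law: since both equations carry the identical reaction term,
\[ \frac{d}{dt}\int_{\Rm^d} s\,dx = \frac{d}{dt}\int_{\Rm^d} e\,dx = -\epsilon\int_{\Rm^d}(se)^{q/2}\,dx, \]
so that $\|s(\cdot,t)\|_{L^1}-\|e(\cdot,t)\|_{L^1}$ is independent of $t$. Relabelling so that $\|s_0\|_{L^1}\ge\|e_0\|_{L^1}$, we then have $\|s(\cdot,t)\|_{L^1}\ge\|e(\cdot,t)\|_{L^1}$ for all $t$, and it is enough to bound the smaller mass $m(t):=\|e(\cdot,t)\|_{L^1}$ from below; the bound for $s$ is automatic. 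Exactly as in Section~\ref{rad}, the favourable sign of the reaction yields the comparison bounds $0\le s\le b_s$ and $0\le e\le b_e$, where $b_s,b_e$ solve the advection--diffusion equation (cf.\ \eqref{heat}) with diffusivities $\kappa_1,\kappa_2$ and data $s_0,e_0$; Lemma~\ref{nash}, whose constants depend on $d,\kappa_i$ but never on $u$, then controls $\|b_s\|_{L^\infty}$ and $\|b_e\|_{L^\infty}$.

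The second ingredient is the way I would estimate the reaction integral: instead of splitting $\int(se)^{q/2}$ symmetrically, I would peel off one full power of the smaller density. Using $q>2$ together with $s\le b_s$, $e\le b_e$,
\[ \int_{\Rm^d}(se)^{q/2}\,dx = \int_{\Rm^d} s^{q/2}e^{q/2-1}\,e\,dx \le \|b_s\|_{L^\infty}^{q/2}\,\|b_e\|_{L^\infty}^{q/2-1}\,\|e\|_{L^1}. \]
Feeding in the Nash decay $\|b_s(\cdot,t)\|_{L^\infty}\le Ct^{-d/2}\|s_0\|_{L^1}$, and similarly for $b_e$, makes the prefactor decay like $t^{-d(q-1)/2}$, which is time-integrable precisely because $qd>d+2$. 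Integrating in time already gives the crude, $u$-independent estimate $m(0)-m(\infty)\le C\epsilon\|e_0\|_{L^1}\int_0^\infty\|b_s\|_{L^\infty}^{q/2}\|b_e\|_{L^\infty}^{q/2-1}\,dt$, which proves the theorem for $\epsilon$ small but leaves the large-$\epsilon$ regime open.

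To cover all $\epsilon$ I would run the self-improving bootstrap of Theorem~\ref{chem1}, anchored directly to the mass. Fix $t_0$, write $\eta=m(t_0)$, restart the comparison at $t_0$ (so $s\le\tilde b_s$, $e\le\tilde b_e$ for $t\ge t_0$, with $\tilde b_s,\tilde b_e$ issuing from the data at $t_0$), and split the time integral at $t_0+\tau$. On $[t_0,t_0+\tau]$ I bound the two $L^\infty$ factors crudely and $u$-independently by $\|s_0\|_{L^\infty}$ and $\|e_0\|_{L^\infty}$; on $[t_0+\tau,\infty)$ I use the Nash decay together with $\|s(\cdot,t_0)\|_{L^1}\le\|s_0\|_{L^1}$ and $\|e(\cdot,t_0)\|_{L^1}=\eta$, and keep the extra factor $\|e(\cdot,t)\|_{L^1}\le\eta$. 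This produces
\[ m(t_0)-m(\infty)\le C\epsilon\Big(\|s_0\|_{L^\infty}^{q/2}\|e_0\|_{L^\infty}^{q/2-1}\,\eta\,\tau + \|s_0\|_{L^1}^{q/2}\,\eta^{q/2}\,\tau^{-\frac{qd-d-2}{2}}\Big). \]
Optimizing in $\tau$ gives $m(t_0)-m(\infty)\le C(\epsilon,q,d,s_0,e_0)\,\eta^{\gamma}$ with $\gamma=\frac{q(d+1)-(d+2)}{d(q-1)}$, and a short computation shows $\gamma>1$ if and only if $q>2$. This superlinearity is decisive: fixing $\eta_0$ (depending only on $\epsilon,q,d,s_0,e_0$) so small that $C\eta_0^{\gamma-1}\le\tfrac12$, we find that $m(t_0)\le\eta_0$ forces $m(t_0)-m(\infty)\le\tfrac12 m(t_0)$. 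A first-passage argument at the level $\eta_0$ then shows $m(t)\ge\tfrac12\min(\eta_0,\|e_0\|_{L^1})=:\mu_1$ for all $t$, and $\mu_1$ is independent of $u$ as required.

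The main obstacle is exactly this $u$-independence, and the reason the scalar proof does not transcribe verbatim. The natural attempt would be a two-density analogue of Lemma~\ref{normcomp}, controlling $\|s(\cdot,t)\|_{L^\infty}/\|s(\cdot,t)\|_{L^1}$ by its initial value; but the correlation inequality that drives the scalar computation now requires $s^{p-1}$ and $s^{q/2-1}e^{q/2}$ to be positively correlated with respect to the measure $s\,dx$, and this fails as soon as $s$ and $e$ are spatially anti-aligned. The way around it is the pairing of the conservation of $\|s\|_{L^1}-\|e\|_{L^1}$ (which reduces everything to a single mass) with the extraction of one whole power of that mass from the reaction term, so that the hypothesis $q>2$ by itself supplies the superlinear decay needed for the bootstrap, while every remaining factor is handled by the flow-independent Lemma~\ref{nash}. (Global smoothness and positivity of $s,e$, needed to justify the comparison and the continuity of $m(t)$, hold here since $\chi=0$ and the reaction is dissipative.)
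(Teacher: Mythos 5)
Your proposal is correct and follows essentially the same route as the paper: the conservation of $\|s\|_{L^1}-\|e\|_{L^1}$ to reduce to the smaller mass, the comparison with the pure advection--diffusion flows controlled by Lemma~\ref{nash}, the split of the time integral at $t_0+\tau$, and the optimization yielding a decrement bounded by $C\,\|e(\cdot,t_0)\|_{L^1}^{1+\frac{q-2}{d(q-1)}}$, with $q>2$ supplying the superlinearity that closes the bootstrap. The only cosmetic difference is that you bound the tail integrand by $\|b_s\|_{L^\infty}^{q/2}\|b_e\|_{L^\infty}^{q/2-1}\|e\|_{L^1}$ whereas the paper uses Cauchy--Schwarz into $\|s\|_{L^q}^{q/2}\|e\|_{L^q}^{q/2}$; both give exactly the bound \eqref{tausys2}.
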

\it Remarks. \rm 1. The condition $q>2$ can be omitted if $\|s_0\|_{L^1}=\|e_0\|_{L^1}$. \\
2. Similarly to Theorem~\ref{chem1}, one can show that $\lim_{t \rightarrow \infty} \|s(\cdot,t)\|_{L^1} \stackrel{\epsilon \rightarrow 0}{\longrightarrow} \|s_0\|_{L^1}$
and $\lim_{t \rightarrow \infty} \|e(\cdot,t)\|_{L^1} \stackrel{\epsilon \rightarrow 0}{\longrightarrow} \|e_0\|_{L^1}$ provided that $q,$ $u,$ $s_0$ and $e_0$ remain
fixed.
\begin{proof}
As before, we know that $s(x,t) \leq \overline{s}(x,t)$ and $e(x,t) \leq \overline{e}(x,t)$ where $\overline{s},\overline{e}$ solve
\eqref{heat} with initial data $s_0$ and $e_0$,  and the diffusion coefficients $\kappa_1$ and $\kappa_2$, respectively.
Lemma~\ref{nash} can still be used to control $\overline{s},\overline{e}.$ Instead of Lemma~\ref{normcomp}, we will use a cruder bound.

Observe that if $\|e_0\|_{L^1} \ne \|s_0\|_{L^1},$ then
the $L^1$ norm that is larger initially remains larger
than the other norm. Hence, assume without loss of generality
that $\|e_0\|_{L^1} \leq \|s_0\|_{L^1}$ and focus
on the decay of $\|e(\cdot,t)\|_{L^1}.$ Let us estimate the decay after some
time $t_0:$
\begin{eqnarray}\nonumber
&&\left| \int_{t_0}^\infty \, dt \int_{\Rm^d} s(x,t)^{q/2}e(x,t)^{q/2}\,dx \right|
\leq \\ \nonumber
&&\left| \int_{t_0}^{t_0+\tau} \, dt \int_{\Rm^d} s(x,t)^{q/2}e(x,t)^{q/2}\,dx\right|
+\left|\int_{t_0+\tau}^\infty \, dt \int_{\Rm^d} s(x,t)^{q/2}e(x,t)^{q/2}\,dx \right|\\ \nonumber
&&\leq
\tau \|s(\cdot,t_0)\|_{L^\infty}^{q/2} \|e(\cdot,t_0)\|_{L^\infty}^{\frac{q}{2}-1}\|e(\cdot,t_0)\|_{L^1} +\int_{t_0+\tau}^{\infty}
 \|s(\cdot,t)\|_{L^q}^{q/2}\|e(\cdot,t)\|_{L^q}^{q/2}\,dt \\
 &&\leq \label{tausys2}
\tau \|s_0\|_{L^\infty}^{q/2}\|e_0\|_{L^\infty}^{\frac{q}{2}-1}\|e(\cdot,t_0)\|_{L^1}+C\tau^{1-\frac{d(q-1)}{2}}\|s_0\|_{L^1}^{q/2}\|e(\cdot,t_0)\|_{L^1}^{q/2}.
\end{eqnarray}
Choosing $\tau$ to minimize \eqref{tausys2} leads to
\begin{equation}\label{finsys45}
 \|e(\cdot,t_0)\|_{L^1} - \|e(\cdot,t)\|_{L^1}
 \leq C(q,d)\epsilon \|s_0\|_{L^\infty}^{\frac{q(qd-d-2)}{2d(q-1)}}\|s_0\|_{L^1}^{\frac{q}{d(q-1)}}\|e_0\|_{L^\infty}^{\frac{(q-2)(qd-d-2)}{2d(q-1)}}\|e(\cdot,t_0)\|_{L^1}^{1+\frac{q-2}{d(q-1)}}.
 \end{equation}
Suppose that $\|e(\cdot,t)\|_{L^1}$ does go to zero as $t \rightarrow \infty.$ Choose $t_0$ so that $C\|e(x,t_0)\|_{L^1}^{\frac{q-2}{d(q-1)}} <\frac12$ (where $C$ is
the constant in front of $\|e(\cdot,t_0)\|_{L^1}^{1+\frac{q-2}{d(q-1)}}$ in \eqref{finsys45}).
In this case, due to \eqref{finsys45}, the $L^1$ of $e(x,t)$ can never drop below half of its value at $t_0.$
This is a contradiction.
\end{proof}

\section{Reaction enhancement by chemotaxis}\label{chemotaxis}

In this section, we will show that chemotaxis, as opposed to
a divergence free fluid flow, can, in principle,
make reaction as efficient as needed.
We consider the equation
\begin{equation}\label{chemnew}
\partial_t \rho =  \Delta \rho-(u \cdot \nabla)\rho + \chi \nabla (\rho \nabla(\Delta)^{-1}\rho)  - \epsilon \rho^q, \,\,\,\rho(x,0)=\rho_0(x).
\end{equation}
We will prove that the large time limit of the $L^1$ norm of $\rho(x,t)$
goes to zero as chemotaxis coupling increases, independently of $\epsilon.$ On the other hand, we will also prove
lower bounds showing that the $L^1$ norm does not go to zero as $t \rightarrow \infty$ for each fixed coupling. Before we state the main results of this section,
there is an auxiliary issue we need to settle.
In general, solutions to the chemotaxis equation may lose regularity in
a finite time (see e.g. \cite{Pert} for further references). As Theorem~\ref{globex} below shows, this does not happen with the additional negative reaction term $-\epsilon \rho^q,$ $q>2$
in the right hand side: solutions with smooth initial data stay smooth. We will work with initial data which is
concentrated in a finite region, in particular, with a finite second moment.
As we will see, this property is also preserved by the evolution. Let us define
\[ \|f\|_{M_n} = \int_{\Rm^d}(|\nabla f|+ |f(x)|)(1+|x|^n)\,dx. \]
Let $H^s$ denote the standard Sobolev spaces in $\Rm^d.$
Define a Banach space $K_{s,n}$ with the norm $\|f\|_{K_{s,n}} = \|f\|_{H^s}+ \| f\|_{M_n}.$ Then we have
\begin{theorem}\label{globex}
Assume that $q>2,$ $n>0$ and $s>d/2+1$ are integers and $\rho_0 \in K_{s,n}.$ Suppose that $u \in C^\infty(\Rm^d \times [0,\infty))$ is
divergence free. Then there exists a unique solution $\rho(x,t)$ of the equation
\eqref{chemnew} in $C(K_{s,n}, [0,\infty)) \cap C^\infty(\Rm^d \times (0,\infty)).$
\end{theorem}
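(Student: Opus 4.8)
The plan is to establish global well-posedness for the chemotaxis-reaction-advection equation \eqref{chemnew} by a standard local existence argument via semigroup/fixed-point methods, followed by a priori estimates that control the relevant norms and thereby rule out finite-time blow-up. First I would set up the problem in mild form using Duhamel's formula. Writing the linear part as the advection-diffusion operator $L\rho = \Delta\rho - u\cdot\nabla\rho$, which generates a strongly continuous (non-autonomous) evolution family $U(t,s)$ on $K_{s,n}$, I would recast \eqref{chemnew} as
\[
\rho(t) = U(t,0)\rho_0 + \int_0^t U(t,\sigma)\Big[\chi\nabla(\rho\,\nabla(\Delta)^{-1}\rho) - \epsilon\rho^q\Big]\,d\sigma.
\]
A contraction mapping argument on $C([0,T_0], K_{s,n})$ for small $T_0$ yields a unique local mild solution. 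The choice $s > d/2 + 1$ is what makes $H^s$ a Banach algebra closed under the relevant products and ensures $\nabla(\Delta)^{-1}$ (a zeroth-order-gain operator on the gradient) maps into a space where the nonlinearity is locally Lipschitz; the smoothing of $U(t,\sigma)$ as $\sigma\to t$ handles the derivative loss in the chemotactic term.

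The heart of the matter is the a priori bounds that upgrade this local solution to a global one, i.e. showing the $K_{s,n}$ norm cannot blow up in finite time. I would proceed in stages. The $L^1$ norm is non-increasing (the reaction is absorbing and the chemotactic and advective terms are in divergence form, integrating to zero). The decisive gain comes from the reaction term: in $d=2$ the chemotactic nonlinearity $\chi\nabla(\rho\nabla\Delta^{-1}\rho)$ is known to drive $L^p$ growth through the critical quantity $\int \rho^2\,dx$ (the term $\chi\int\rho^{p+1}$ appearing in the $\frac{d}{dt}\|\rho\|_{L^p}^p$ estimate), but since $q>2$ the dissipative reaction contributes $-\epsilon p(p-1)\int\rho^{q+p-1}$, and by interpolation/Young's inequality the reaction term dominates the chemotactic production for every fixed $p$. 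This gives uniform-in-time bounds on all $\|\rho(\cdot,t)\|_{L^p}$, hence on $\|\rho\|_{L^\infty}$ by a Moser-type iteration or by bootstrapping. With $L^\infty$ control in hand, $\nabla\Delta^{-1}\rho$ and its derivatives are controlled (via Calderón-Zygmund and the $L^1\cap L^\infty$ bounds on $\rho$), the advection field $u$ and its derivatives are bounded on $[0,T]$ by hypothesis, and one closes energy estimates for $\|\rho\|_{H^s}$ by differentiating the equation, using the algebra property of $H^s$ and the smoothing of the linear semigroup to absorb the top-order chemotactic term.

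The weighted moment part of the norm, $\|\rho\|_{M_n}$, I would control separately by testing the equation against $(1+|x|^n)$ and $(1+|x|^n)$ times derivatives: the diffusion and advection generate at worst polynomially-growing-in-time contributions to the moments, the chemotactic term is handled after integration by parts using that $\nabla\Delta^{-1}\rho$ decays and $\rho\in L^1$, and the reaction only helps. This confirms finite moments are propagated on each $[0,T]$. Combining the uniform $H^s$ bound with the $M_n$ bound shows $\|\rho(\cdot,t)\|_{K_{s,n}}$ stays finite for all finite $t$, so the local solution extends globally; the interior smoothing of the parabolic evolution then upgrades regularity to $C^\infty(\Rm^d\times(0,\infty))$ by a standard bootstrap, and uniqueness follows from the contraction estimate applied on each time interval. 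The main obstacle I anticipate is the a priori $L^p$/$L^\infty$ bound: one must verify carefully that the dissipation from $-\epsilon\rho^q$ with $q>2$ genuinely controls the chemotactic production $\chi\int\rho^{p+1}$ at every $p$ without a smallness or largeness restriction coupling $\epsilon$ and $\chi$, since the whole point of Theorem~\ref{chem2} is that the balance is non-trivial and the constants must be tracked. This is precisely where the restriction $q>2$ and $d=2$ enter, and where I would spend the bulk of the effort.
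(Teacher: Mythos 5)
Your overall skeleton matches the paper's: a Duhamel/contraction-mapping local existence argument in $C([0,T],K_{s,n})$ (the paper uses the heat semigroup $e^{t\Delta}$ and puts $-\nabla\cdot(u\rho)$ into the Duhamel integrand rather than your evolution family $U(t,\sigma)$, an immaterial repackaging), followed by a priori control of $\|\rho\|_{L^\infty}$, of the moments $\|\rho\|_{M_n}$, and of $\|\rho\|_{H^s}$ via Gagliardo--Nirenberg energy estimates, and then continuation plus parabolic bootstrap. The one place you genuinely diverge is the $L^\infty$ bound, which is the crux. You propose $\tfrac{d}{dt}\|\rho\|_{L^p}^p$ estimates, where the chemotactic production is $\chi(p-1)\int\rho^{p+1}$ and the reaction contributes $-\epsilon p\int\rho^{q+p-1}$ (your coefficient $p(p-1)$ is off, harmlessly), then Young's inequality and a Moser-type iteration. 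This can be made to work, but the iteration step is the delicate part you left unexamined: after absorbing via Young you pick up $p$-dependent constants, and you must verify that the resulting $L^p$ bounds satisfy $C(p)^{1/p}=O(1)$ as $p\to\infty$ (e.g.\ via the pointwise inequality $\rho^{p+1}(\chi-\epsilon\rho^{q-2})_+\le \chi(\chi/\epsilon)^{p/(q-2)}\rho$, whose $p$-th root stabilizes). The paper instead proves the $L^\infty$ bound by a direct pointwise maximum principle (Lemma~\ref{linfty}): at an interior maximum $\nabla\rho=0$, $\Delta\rho\le 0$, and $\nabla\cdot(\rho\nabla\Delta^{-1}\rho)=\nabla\rho\cdot\nabla\Delta^{-1}\rho+\rho^2$, giving $\partial_t\rho\le\rho^2(\chi-\epsilon\rho^{q-2})<0$ once $\rho>(\chi/\epsilon)^{1/(q-2)}$; the price is a separate argument (using the moment and $H^s$ control) that the supremum is actually attained on $\Rm^d$. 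The paper's route is more elementary and yields the explicit constant $N_0=\max\bigl((\chi/\epsilon)^{1/(q-2)},\|\rho_0\|_{L^\infty}\bigr)$, which is then reused quantitatively in Theorem~\ref{chemdiss}; your route avoids the attainment-of-maximum issue but delivers a less explicit bound. One correction: Theorem~\ref{globex} is stated and proved for arbitrary dimension $d$, and neither the identity $\nabla\cdot\nabla\Delta^{-1}\rho=\rho$ nor the maximum principle (nor your $L^p$ computation) uses $d=2$; the restriction $d=2$ belongs to Theorems~\ref{chemore} and \ref{chemdiss}, not to the global regularity statement, so your closing worry about where $d=2$ enters is misplaced here.
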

\noindent
The proof of Theorem~\ref{globex} uses fairly standard techniques; we sketch it in Appendix I.

First, we prove the bound showing reaction enhancement by chemotaxis. Let us define
\[
m_2 = {\rm min}_{x_0} \int_{\Rm^d}|x-x_0|^2 \rho_0(x)\,dx.
\]
\begin{theorem}\label{chemore}
Let $d=2,$ and suppose that $u \in C^\infty(\Rm^d \times [0,\infty))$ is
divergence free. Assume that $q>2$, $s>d/2+1$ and $n \geq 2$ are integers and $\rho(x,t)$ solves \eqref{chemnew} with $\rho_0 \geq 0 \in K_{s,n}.$
Then

a. If $u=0,$ then $\lim_{t \rightarrow \infty}\|\rho(\cdot,t)\|_{L^1} \leq 2\chi^{-1}.$
More precisely, for every $\tau>0,$ we have
\begin{equation}\label{dec1a}
\|\rho(\cdot,\tau)\|_{L^1} \leq \frac{2}{\chi}\left(1+\sqrt{1+\frac{\chi m_2}{4\tau}}\right).
\end{equation}

b. If $u \ne 0,$ then $\lim_{t \rightarrow \infty}\|\rho(\cdot,t)\|_{L^1} \leq C(u,m_2)\chi^{-2/3}.$ Moreover,
for $0 \leq \tau \leq \chi^{1/3}$ we have
\begin{equation}\label{dec1b}
\|\rho(\cdot,\tau)\|_{L^1} \leq C(u,m_2)(\chi\tau)^{-1/2}.
\end{equation}
\end{theorem}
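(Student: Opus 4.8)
The plan is to run the classical second--moment (virial) argument tailored to the Keller--Segel nonlinearity, exploiting that in $d=2$ the chemotactic term produces, after symmetrization, a sign--definite multiple of the squared mass, while the reaction term only helps and can be discarded. Write $m(t)=\|\rho(\cdot,t)\|_{L^1}$ and, fixing the minimizing center $x_0$ for $\rho_0$, set $M_2(t)=\int_{\Rm^2}|x-x_0|^2\rho(x,t)\,dx$, so that $M_2(0)=m_2$; by Theorem~\ref{globex} (which guarantees $\rho(\cdot,t)\in K_{s,n}$ with $n\ge 2$) the quantity $M_2(t)$ is finite, differentiable, and all the integrations by parts below are justified. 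Since $\rho\ge 0$ and $u$ is divergence free, $m(t)$ is nonincreasing, with $m'(t)=-\epsilon\int_{\Rm^2}\rho^q\le 0$.

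First I would compute $\frac{d}{dt}M_2$ directly from \eqref{chemnew}. The diffusion term contributes $\int|x-x_0|^2\Delta\rho=2d\,m=4m$; the reaction term contributes $-\epsilon\int|x-x_0|^2\rho^q\le 0$ and is simply dropped, which is exactly the source of the $\epsilon$--independence noted in Remark~3 after Theorem~\ref{chem2}. For the chemotactic term I would integrate by parts once and then symmetrize the resulting double integral against the two--dimensional kernel $\nabla\Delta^{-1}\rho(x)=\tfrac1{2\pi}\int\frac{x-y}{|x-y|^2}\rho(y)\,dy$: the homogeneity of this kernel is what makes $x\cdot\nabla\Delta^{-1}\rho$ collapse after symmetrization to a constant, producing a clean multiple of $m^2$ (this is precisely where $d=2$ is used). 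Thus, for $u=0$, one obtains $\frac{d}{dt}M_2\le 4m-c\chi\,m^2$ with an explicit $c$. Integrating on $[0,\tau]$, using $M_2(\tau)\ge 0$ and the monotonicity $m(t)\ge m(\tau)$ together with the fact that $g(m)=4m-c\chi m^2$ is decreasing once $m$ passes its vertex, gives $0\le m_2+\tau\,g(m(\tau))$ whenever $m(\tau)$ lies above the vertex (the complementary case being even smaller and checked by hand). This is a quadratic inequality in $m(\tau)$ whose relevant root has the form stated in \eqref{dec1a}; letting $\tau\to\infty$ yields the limiting bound in part (a).

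For part (b) the only new term in $\frac{d}{dt}M_2$ is the advective one, which after an integration by parts using $\nabla\cdot u=0$ equals $\int 2(x-x_0)\cdot u\,\rho\,dx$. I would bound it by Cauchy--Schwarz as $2\|u\|_\infty\,m^{1/2}M_2^{1/2}$, or by Young's inequality as $\alpha M_2+\alpha^{-1}\|u\|_\infty^2 m$, arriving at $\frac{d}{dt}M_2\le 4m+\alpha M_2+\alpha^{-1}\|u\|_\infty^2 m-c\chi m^2$. The essential new difficulty is that advection now feeds $M_2$ into its own growth, so the exact virial identity of part (a) must be coupled to a Gronwall/continuity estimate. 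One chooses $\alpha\sim\chi^{-1/3}$ so that on the interval $\tau\le\chi^{1/3}$ the growth factor $e^{\alpha\tau}$ remains bounded; the threshold $\chi^{1/3}$ in the statement is exactly the time at which the accumulated advective production of second moment becomes comparable to the initial budget $m_2$, which is why the clean decay $\|\rho(\cdot,\tau)\|_{L^1}\le C(u,m_2)(\chi\tau)^{-1/2}$ in \eqref{dec1b} is asserted only up to that time, and why evaluating at $\tau=\chi^{1/3}$ produces the $\chi^{-2/3}$ limiting bound.

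The hard part will be closing part (b) rigorously: one must control $\int_0^\tau M_2\,dt$ and $\int_0^\tau m\,dt$ well enough that the right--hand side of the integrated inequality stays of order $m_2$. Crude bounds such as $m\le m(0)$ or $M_2\le\text{const}$ are too lossy and yield the wrong power of $\chi$; one genuinely needs the very decay $m(t)\lesssim(\chi t)^{-1/2}$ that is being proved. I therefore expect the argument to be a bootstrap: assume $m(t)\le 2C(\chi t)^{-1/2}$ on a maximal subinterval of $[0,\chi^{1/3}]$, use this to bound the advective and diffusive integrals, feed the result back through the virial inequality to recover the same bound with a strictly smaller constant, and then conclude by continuity that it persists on all of $[0,\chi^{1/3}]$. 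The delicate point is ensuring the constants close with the factor $\alpha\sim\chi^{-1/3}$, so that the self--reinforcing $\alpha M_2$ term does not overwhelm the chemotactic gain $c\chi m^2$ before the desired decay has been established.
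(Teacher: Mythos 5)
Part (a) of your proposal is essentially the paper's argument: the virial identity with the symmetrized chemotaxis kernel, dropping the reaction term, and the quadratic inequality obtained from $M_2(\tau)\ge 0$ together with the monotonicity of $m(t)$. That part is fine.

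In part (b), however, you have the right ingredients (Young's inequality with weight $\chi^{\beta}$, the integrating factor $e^{-2\chi^{-\beta}t}$, the restriction $\tau\le\chi^{\beta}$ with $\beta=1/3$) but you then declare that closing the argument requires a bootstrap/continuity scheme, which you do not carry out --- and which is in fact unnecessary. The point you are missing is that the same mechanism that closed part (a) closes part (b) verbatim: since $m(t)$ is nonincreasing, the hypothesis ``$m(t)\ge Y$ on all of $[0,\tau]$'' is \emph{free} once you set $Y=m(\tau)$; there is nothing to bootstrap. Concretely, after Young's inequality one has
\[
\partial_t\Bigl( e^{-2\chi^{-\beta}t}M_2(t)\Bigr)\le e^{-2\chi^{-\beta}t}\,g(m(t)),\qquad g(y)=\bigl(4+2\chi^{\beta}\|u\|_{L^\infty}^2\bigr)y-\chi y^2 .
\]
If $Y=m(\tau)$ exceeds the vertex of $g$ (otherwise $Y\lesssim \chi^{\beta-1}=\chi^{-2/3}$ and you are done), then $g(m(t))\le g(Y)<0$ for all $t\in[0,\tau]$ by monotonicity of $m$ and of $g$ past its vertex. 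Integrating over $[0,\tau]$ and using $M_2(\tau)\ge 0$ gives
\[
-m_2\;\le\;\int_0^\tau e^{-2\chi^{-\beta}t}\,dt\;\cdot\;g(Y),
\]
and for $\tau\le\chi^{\beta}$ the time integral is comparable to $\tau$ (it equals $\tfrac{\chi^{\beta}}{2}(1-e^{-2\chi^{-\beta}\tau})\ge c\,\tau$). This is exactly the quadratic inequality $c\,\tau\,Y(\chi Y-4-2\chi^{1/3}\|u\|_{L^\infty}^2)\le 2m_2$ of the paper, whose solution yields \eqref{dec1b} and, at $\tau=\chi^{1/3}$, the $\chi^{-2/3}$ bound. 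Your worry that one ``genuinely needs the very decay being proved'' to control $\int_0^\tau m\,dt$ is a red herring: one never bounds that integral from above; one only needs the \emph{lower} bound $m(t)\ge m(\tau)$, which is automatic. As written, your part (b) is incomplete, since the bootstrap you defer to is never shown to close.
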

\it Remark. \rm Note, in particular, that if $u=0,$ the level $\|\rho(\cdot,\tau)\|_{L^1} \sim \chi^{-1}$ will be reached in
at most $\tau \sim \chi,$
while the level $\sim \chi^{-1/2}$ in at most $\tau \sim 1.$ If $u \ne 0,$ the upper bound on the time scale to reach the $L^1$ norm level $\sim \chi^{-1/2}$  is also $\tau\sim 1$.
\begin{proof}
Since $\rho_0 \in K_{s,n},$ there exists $x_0$ such that $\int_{\Rm^2}|x-x_0|^2 \rho_0(x)\,dx =m_2.$ Set $x_0=0$ for simplicity. Consider
\begin{equation}\label{cc1}
\partial_t \int_{\Rm^2} |x|^2 \rho\,dx = \int_{\Rm^2}|x|^2 (u \cdot \nabla)\rho\,dx +\int_{\Rm^2}|x|^2 \Delta \rho \,dx+\chi \int_{\Rm^2}
|x|^2 \nabla(\rho \nabla \Delta^{-1}\rho)\,dx - \epsilon \int_{\Rm^2}|x|^2\rho^q\,dx.
\end{equation}
Observe that due to $\nabla \cdot u =0,$
\[
\int_{\Rm^2} |x|^2(u \cdot \nabla)\rho \,dx = -2 \int_{\Rm^2} (x \cdot u) \rho\,dx,
\]
and in dimension two
\[
\int_{\Rm^2}|x|^2 \Delta \rho \,dx =4\int_{\Rm^2}\rho\,dx.
\]
For the chemotaxis term, we have
\[ \int_{\Rm^2} |x|^2 \nabla(\rho \nabla \Delta^{-1}\rho)\,dx =
-2 \int_{\Rm^2\times\Rm^2}  \frac{x\cdot(x-y)}{|x-y|^2} \rho(x,t) \rho(y,t)\,dxdy =
-\left( \int_{\Rm^2} \rho \,dx \right)^2. \]
In the last step, we used symmetrization in $x,y.$ Due to Theorem~\ref{globex}, all integrations by parts are justified for all $t \geq 0.$
Therefore, we can recast \eqref{cc1} as
\begin{equation}\label{cc2}
\partial_t \int_{\Rm^2} |x|^2 \rho\,dx = -2 \int_{\Rm^2} (x \cdot u) \rho\,dx +
4\int_{\Rm^2}\rho\,dx - \chi \left( \int_{\Rm^2} \rho \,dx \right)^2
- \epsilon \int_{\Rm^2}|x|^2\rho^q\,dx.
\end{equation}
First let us set $u=0$ in \eqref{cc2}. Suppose that $\|\rho(\cdot,t)\|_{L^1} \geq Y$ for all $t \in [0,\tau]$, and $Y\ge 4/\chi$.
It follows from \eqref{cc2}  that we need
$\tau Y(\chi Y - 4) \le m_2$ to avoid contradiction that $m_2$ vanishes. This quadratic inequality translates into \eqref{dec1a}.

Now, assume that $u$ is an arbitrary smooth divergence free vector field. In this case, we further estimate
\[ \left| \int_{\Rm^2} x \cdot u \rho\,dx \right| \leq \|u\|_{L^\infty}^2 \chi^\beta \int_{\Rm^2} \rho\,dx +
\chi^{-\beta} \int_{\Rm^2} |x|^2 \rho\,dx,
\]
with $\beta>0$ to be chosen.
Then,  it follows  from \eqref{cc2} that
\[ \partial_t \int_{\Rm^2} |x|^2 \rho\,dx <  2\chi^{-\beta}\int_{\Rm^2} |x|^2 \rho\,dx +
\left(4+2\chi^\beta \|u\|_{L^\infty}^2 -\chi \int_{\Rm^2}\rho\,dx \right) \int_{\Rm^2} \rho \,dx, \]
and thus
\begin{equation}\label{cc4}
\partial_t \left( e^{-2\chi^{-\beta}t} \int_{\Rm^2} |x|^2 \rho\,dx \right) < e^{-2\chi^{-\beta}t}
  \left(4+2\chi^\beta \|u\|_{L^\infty}^2 -\chi \int_{\Rm^2}\rho\,dx \right) \int_{\Rm^2} \rho \,dx  .
\end{equation}
Assume now that for all $t \in [0,\tau],$ we have $\|\rho(\cdot,t)\|_{L^1} \geq Y>0$, and that
\[
Y \geq \frac{2}{\chi}(2+\chi^\beta \|u\|_{L^\infty}^2).
\]
Then, the integral in time of the right hand side in \eqref{cc4}
over $[0,\tau]$ can be estimated from above by
\begin{equation}\label{cc5} \int_0^\tau e^{-2\chi^{-\beta}t} Y
 \left(4+2\chi^\beta \|u\|_{L^\infty}^2 -\chi Y \right)\,dt = \left( 1- e^{-2\chi^{-\beta}\tau} \right) Y\chi^\beta
 \left(2+\chi^\beta \|u\|_{L^\infty}^2 -\chi Y/2 \right). \end{equation}
Setting $\tau = \chi^{\beta}$,  we see that to avoid a contradiction, we need
\begin{equation}\label{cc6}
(1-e^{-2}) \chi^\beta Y \left(\chi Y -4 -2 \chi^\beta \|u\|_{L^\infty}^2 \right) \leq 2 m_2.
\end{equation}
An elementary computation shows that the optimal choice that makes $Y$ the smallest is $\beta=1/3$.
Solving this quadratic inequality, we find that $\|\rho(\cdot,\tau=\chi^{1/3})\|_{L^1}$ cannot exceed $c(u,m_2)\chi^{-2/3}$.
More generally, for $0 < \tau \leq \chi^{1/3},$ we get from (\ref{cc5}) the bound
\[
\|\rho(\cdot,\tau)\|_{L^1} \leq C(u,m_2)(\chi\tau)^{-\frac12}.
\]
\end{proof}
Observe that the reaction term did not play any quantitative role in the estimates. In particular, all estimates of the $L^1$ norm decrease and
timescales are independent of $\epsilon.$ The only role the reaction term plays is making sure we have a smooth decaying solution, so that
all integrations by parts are justified. On a qualitative level, what happens is chemotaxis without reaction would lead to a $\delta$ function
profile blow up. With reaction present, the growth in the $L^\infty$ norm of the solution is determined by the balance between chemotaxis
and reaction term. Weaker fertilization coupling parameter $\epsilon$ leads to stronger aggregation due to chemotaxis
and thus effectively the same fertilization rates. In particular, the amount of the density that does react satisfies the lower bound independent
of $\epsilon.$ The same holds true for the time scale on which this reaction takes place - it is bounded from above by the pure chemotaxis blow
up time independently of $\epsilon.$

Next we prove a result in the opposite direction, showing that at least some estimates of Theorem~\ref{chemore} scale sharply in $\chi.$
\begin{theorem}\label{chemdiss}
Let $d = 2,$ and suppose that $u \in C^\infty(\Rm^d \times [0,\infty))$ is
divergence free. Assume that $q>2$, $s>d/2+1$
and $n \geq 2$ are integers and $\rho(x,t)$ solves \eqref{chemnew} with $\rho_0 \geq 0 \in K_{s,n}.$
Then $\lim_{t \rightarrow \infty}\|\rho(\cdot,t)\|_{L^1}>0.$ Moreover, for some initial data $\rho_0,$ $\|\rho(\cdot,t)\|_{L^1}$ remains above
$c(q,\rho_0)\chi^{-1}$ for all times.
\end{theorem}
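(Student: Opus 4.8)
The plan is to exploit the fact that the $L^1$ norm can only decrease through the reaction, $\partial_t\|\rho\|_{L^1}=-\epsilon\int_{\Rm^2}\rho^q\,dx$, so that the total mass lost equals $\epsilon\int_0^\infty\!\int_{\Rm^2}\rho^q\,dx\,dt=\|\rho_0\|_{L^1}-\lim_{t\to\infty}\|\rho\|_{L^1}$; the theorem thus amounts to bounding this reaction integral from above. The conceptual core is an $L^2$ energy identity revealing a critical mass of order $\chi^{-1}$. Multiplying \eqref{chemnew} by $\rho$ and integrating, the advection term drops by incompressibility and the chemotactic term yields a positive contribution $\chi\int\rho^3$ (using $\Delta\Delta^{-1}\rho=\rho$), giving $\partial_t\int\rho^2=-2\int|\nabla\rho|^2+\chi\int\rho^3-2\epsilon\int\rho^{q+1}$. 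The sharp two-dimensional Gagliardo--Nirenberg inequality $\int\rho^3\le C\,\|\rho\|_{L^1}\int|\nabla\rho|^2$ then gives $\partial_t\int\rho^2\le\big(C\chi\|\rho\|_{L^1}-2\big)\int|\nabla\rho|^2-2\epsilon\int\rho^{q+1}$. Hence once the mass drops below the threshold $M_c\sim(C\chi)^{-1}$, the $L^2$ norm (and, by the analogous computation for $\|\rho\|_{L^p}^p$, every $L^p$ norm) is dissipated; crucially this threshold is independent of $u$ and of $\epsilon$.

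Next I would establish that in the subcritical regime the solution disperses at the heat rate. If $m_0(t)\le Y\le M_c/2$ for $t\ge t_Y$, then combining the dissipation above with the Nash inequality yields $z'(t)\le -c\,z(t)^2/Y^2$ for $z=\int\rho^2$, whose solutions satisfy $z(t)\le Y^2/\big(c(t-t_Y)\big)$ regardless of the size of $z(t_Y)$ --- an instantaneous smoothing that forgets any concentration present at $t_Y$. Bootstrapping the $L^p$ identities (a Moser-type iteration, in the spirit of Lemma~\ref{nash}, with $u$ dropping out by incompressibility) upgrades this to $\|\rho(\cdot,t)\|_{L^\infty}\le C\,Y/(t-t_Y)$. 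The second, and decisive, ingredient is an a priori ceiling on the amplitude coming from the maximum principle: at a spatial maximum of $\rho$ one has $\Delta\rho\le0$ and $u\cdot\nabla\rho=0$, while the chemotactic term equals $\chi\rho^2$, so $\tfrac{d}{dt}\|\rho\|_{L^\infty}\le\chi\|\rho\|_{L^\infty}^2-\epsilon\|\rho\|_{L^\infty}^q$. Consequently $\|\rho(\cdot,t)\|_{L^\infty}\le\max\big(\|\rho_0\|_{L^\infty},\,L\big)$ with $L:=(\chi/\epsilon)^{1/(q-2)}$; this is precisely the chemotaxis--reaction balance noted after Theorem~\ref{chemore}, and it is what will make the final estimate independent of $\epsilon$.

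With these two bounds I would estimate the reaction occurring after the mass first reaches a subcritical level $Y$. Writing $\int\rho^q\le\|\rho\|_{L^\infty}^{q-1}\|\rho\|_{L^1}\le\|\rho\|_{L^\infty}^{q-1}Y$ and using $\|\rho\|_{L^\infty}\le\min\big(CL,\;C'Y/(t-t_Y)\big)$, I split the time integral at the crossover $\tau_*\sim Y/L$: on $[t_Y,t_Y+\tau_*]$ the ceiling controls the otherwise singular contribution, and on $[t_Y+\tau_*,\infty)$ the decay rate $(t-t_Y)^{-(q-1)}$ is integrable exactly because $q>2$. Both pieces come out of order $\epsilon L^{q-2}Y^2$, and since $\epsilon L^{q-2}=\chi$ this equals $C^\ast\chi Y^2$ with $C^\ast=C^\ast(q)$ independent of $\epsilon$ and $u$. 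Taking $Y=\delta M_c\sim\delta\chi^{-1}$ gives total subsequent reaction $\lesssim\delta^2\chi^{-1}$, which is $\ll Y$ for $\delta$ small; hence $\lim_{t\to\infty}\|\rho\|_{L^1}\ge(1-O(\delta))Y>0$, proving positivity. For the quantitative claim one chooses $\rho_0$ with $\|\rho_0\|_{L^\infty}\lesssim L$ (so the ceiling is governed by a $\rho_0$-dependent constant) and optimizes $\delta\sim1$, yielding $\lim_{t\to\infty}\|\rho\|_{L^1}\ge c(q,\rho_0)\chi^{-1}$.

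I expect the main obstacle to be the step bounding the reaction through the crossing of the critical mass: near $t_Y$ the pure dispersion bound blows up and $\int(t-t_Y)^{-(q-1)}\,dt$ diverges at the endpoint, so the crude identity $\epsilon\int_{t_Y}^{\cdot}\!\int\rho^q=Y-m_0(\cdot)\le Y$ is too lossy to beat $Y$. It is exactly the a priori ceiling $L$, together with the cancellation $\epsilon L^{q-2}=\chi$, that removes both the endpoint singularity and the $\epsilon$-dependence; making the $L^2\to L^\infty$ subcritical smoothing rigorous with $\epsilon$- and $u$-independent constants (a Moser iteration on the energy identities, justified on $(0,\infty)$ by Theorem~\ref{globex}) is the remaining technical labor.
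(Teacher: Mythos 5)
Your proposal is correct in outline and rests on exactly the same three pillars as the paper's proof: the critical mass of order $\chi^{-1}$, obtained by absorbing the chemotactic production term into the dissipation via the two--dimensional Gagliardo--Nirenberg inequality; the $\epsilon$-independent amplitude ceiling ${\rm max}(\|\rho_0\|_{L^\infty},(\chi/\epsilon)^{1/(q-2)})$ from the maximum principle (this is Lemma~\ref{linfty} verbatim); and the splitting of $\epsilon\int_{t_Y}^\infty\int\rho^q\,dx\,dt$ at a crossover time, with the cancellation $\epsilon L^{q-2}=\chi$ producing an $\epsilon$-independent subsequent loss of order $\chi Y^2\ll Y$. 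The one place you genuinely diverge is the smoothing step, and it is precisely where your remaining technical labor sits. The paper never proves $L^\infty$ decay: it runs the energy identity directly at the exponent $q$ (multiplying by $\rho^{q-1}$), absorbs $\chi(q-1)\int\rho^{q+1}$ using \eqref{nonstgn} with $\alpha=2/q$ once $C(q)\chi\|\rho\|_{L^1}<2$, and then obtains $\int\rho^q\lesssim (t-t_0)^{-(q-1)}\|\rho(\cdot,t_0)\|_{L^1}^q$ from the single Nash-type inequality \eqref{nonstgn1} --- which is all that is needed, since the reaction term is exactly $\rho^q$. Your route through $L^2$ decay plus a Moser iteration to $\|\rho\|_{L^\infty}\lesssim Y/(t-t_Y)$ would also work, but note that the duality trick of Lemma~\ref{nash} does not transfer to the nonlinear equation, that interpolating the $L^2$ decay against the static ceiling only yields $\int\rho^q\lesssim (t-t_Y)^{-1}$ (not integrable), and that a genuine iteration must either absorb the chemotactic term at every level $p=2^k$ (requiring control of the $p$-dependence of the Gagliardo--Nirenberg constants) or treat it as a zeroth-order term with the enormous coefficient $\chi N_0$; working at the single exponent $q$ sidesteps all of this. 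Your final bookkeeping --- loss at most $C\chi Y^2$ after the threshold is first reached, hence ${\rm inf}_t\|\rho\|_{L^1}\geq(1-O(\delta))Y>0$ for $Y\sim\delta\chi^{-1}$, and the choice of $\rho_0$ with $\|\rho_0\|_{L^\infty}\lesssim(\chi/\epsilon)^{1/(q-2)}$ and $\chi\|\rho_0\|_{L^1}$ small so that $t_0=0$ --- matches \eqref{eq5} and the paper's conclusion exactly (up to harmless constants such as the factor $\tfrac12$ you drop in computing the chemotactic contribution $\tfrac{\chi}{2}\int\rho^3$ to the $L^2$ identity).
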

\begin{proof}
Recall that $\partial_t \int_{\Rm^2} \rho(x,t)\,dx = - \int_{\Rm^2} \rho(x,t)^q\,dx.$ Let us derive estimates on $\|\rho\|_{L^q}.$
Multiplying \eqref{chemnew} by $\rho^{q-1}$ and integrating, we obtain
\begin{equation}\label{eq1} \frac1q \partial_t \int_{\Rm^2} \rho^q \, dx = \int_{\Rm^2} \rho^{q-1} \Delta \rho\,dx + \chi \int_{\Rm^2} \rho^{q-1} \nabla \cdot
(\rho \nabla \Delta^{-1}\rho)\,dx - \epsilon \int_{\Rm^2} \rho^{2q-1}\,dx. \end{equation}
Observe that
\[ \int_{\Rm^2} \rho^{q-1} \nabla \cdot
(\rho \nabla \Delta^{-1}\rho)\,dx = -(q-1)\int_{\Rm^2} \rho^{q-1} \nabla \rho \cdot \nabla \Delta^{-1}\rho\,dx = \frac{q-1}{q} \int_{\Rm^2}
\rho^{q+1}\,dx. \]
The last equality is obtained by integration by parts. Thus, we can rewrite \eqref{eq1} as
\begin{equation}\label{eq2}
\partial_t \int_{\Rm^2} \rho^q \, dx = -\frac{4(q-1)}{q} \int_{\Rm^2} | \nabla \rho^{q/2}|^2\,dx + \chi(q-1) \int_{\Rm^2} \rho^{q+1}\,dx
- q \epsilon \int_{\Rm^2} \rho^{2q-1}\,dx.
\end{equation}
Let us introduce $v = \rho^{q/2},$ and recall a Gagliardo-Nirenberg type inequality
\begin{equation}\label{nonstgn} \|v\|_{L^{2+\alpha}} \leq C(d,\alpha) \|\nabla v\|_{L^2}^{\frac{2}{2+\alpha}}
\|v\|_{L^{\frac{\alpha d}{2}}}^{\frac{\alpha}{2+\alpha}}, \end{equation}
which is valid for all $\alpha>0,$ $d \geq 1.$  In our case, we set $\alpha = 2/q$, and inequality \eqref{nonstgn} translates into
\[ \int_{\Rm^2} \rho^{q+1}\,dx \leq C(q) \int_{\Rm^2} |\nabla \rho^{q/2}|^2\,dx \int_{\Rm^2} \rho\,dx. \]
Observe that $\alpha d/2 <1.$ While inequalities of this kind are well known to the experts \cite{Maz}, the references that include the case of exponents less than one
are not common. For the sake of completeness, we provide a sketch of a simple proof of inequality \eqref{nonstgn} in Appendix II.
Therefore, from \eqref{eq2} we can conclude that
\[ \partial_t \int_{\Rm^2} \rho^q \,dx \leq -\frac{q-1}{q} \int_{\Rm^2} |\nabla \rho^{q/2}|^2 \,dx \left( 4 - C(q)\chi \int_{\Rm^2} \rho \,dx \right) -
q \epsilon \int_{\Rm^2} \rho^{2q-1}\,dx. \]
Now, suppose that $C(q)\chi\int_{\Rm^2} \rho(x,t)\,dx$ drops below $2$ at some time $t_0.$ Then, for all later times, we get
\begin{equation}\label{eq3}
\partial_t \int_{\Rm^2} \rho^q \,dx \leq -C(q) \int_{\Rm^2} |\nabla \rho^{q/2}|^2 \,dx
\end{equation}
(we use $C(q)$ for a positive constant depending only on $q$ that may change from line to line).
Let us recall another Gagliardo-Nirenberg inequality
\begin{equation}\label{nonstgn1} \|v\|_{L^2}^{1+\frac{2}{d(q-1)}} \leq C(q,d) \|\nabla v\|_{L^2} \|v\|_{L^{2/q}}^{\frac{2}{d(q-1)}}. \end{equation}
Applying it in \eqref{eq3} with $v =\rho^{q/2}$ in $d=2$ leads to
\[
\partial_t \int_{\Rm^2} \rho^q \, dx \leq -C(q)
\left( \int_{\Rm^2} \rho^q\,dx \right)^{1+\frac{1}{q-1}}\left(\int_{\Rm^2} \rho\,dx \right)^{-\frac{q}{q-1}}.
\]
Solving this differential inequality, and using the fact that $\int_{\Rm^2}\rho\,dx$ is monotone decreasing, leads to
\[ \int_{\Rm^2}\rho(x,t)^q\,dx \leq {\rm min}\left( \int_{\Rm^2} \rho(x,t_0)^q\,dx, C(q)(t-t_0)^{-q+1} \left( \int_{\Rm^2}\rho(x,t_0)\,dx \right)^q \right).  \]
Then the argument identical to that in the proof of Theorem~\ref{chem1} implies that
\begin{equation}\label{eq5}
 {\rm inf}_t \int_{\Rm^2} \rho(x,t)\,dx \geq {\rm min} \left( \frac12 \|\rho(\cdot, t_0)\|_{L^1}, C(q) \epsilon^{-\frac{1}{q-1}} \left( \frac{\|\rho(\cdot,t_0)\|_{L^1}}{\|\rho(\cdot,t_0)\|_{L^\infty}} \right)^{\frac{q-2}{q-1}} \right)
\end{equation}
(observe that the proof of Lemma~\ref{normcomp} goes through when $C(q)\chi\int_{\Rm^2} \rho(x,t)\,dx<2$).
Since we have a uniform in time upper bound $\|\rho(x,t)\|_{L^\infty} \leq {\rm max}((\chi/\epsilon)^{\frac{1}{q-2}},\|\rho_0\|_{L^\infty})$ (see Lemma~\ref{linfty} below), \eqref{eq5} implies the first statement of the theorem. Moreover, we can always take initial data
such that $t_0=0,$ and the $L^\infty$ norm of $\rho_0$ is sufficiently small ($\leq (\chi/\epsilon)^{\frac{1}{q-2}}$), making the bound on the right hand side of \eqref{eq5} equal to $c(q)\chi^{-1}.$
This proves the second statement of the theorem.
\end{proof}

\section{Appendix I: Global existence of smooth solutions}\label{app1}

Here we prove Theorem~\ref{globex}. We point out that equations involving chemotaxis and logistics or bistable type reactions have been
considered by many authors (see e.g. \cite{MiTsu,Osaki,TeWi,Wink1,Wink2} where further references can be found). In particular,
in \cite{TeWi,Wink2}, global regularity of solutions to a system similar to \eqref{chemo} was obtained. The main difference between these
works and what we need here is that \cite{TeWi,Wink2} work on a bounded domain. Since we consider a different setting and need different
control of the solution (including moments) we present a brief sketch of the proof of global regularity result in this appendix.

We begin with the construction of a
local solution in an appropriate space. We will consider arbitrary dimension $d.$
Recall that
\[
\|f\|_{M_n} = \int_{\Rm^d} (|\rho(x)| + |\nabla \rho(x)|)(1+|x|^n)\,dx,
\]
and the Banach space $K_{s,n}$ is defined by the norm
$\|f\|_{K_{s,n}} = \|f\|_{M_n} + \|f\|_{H^s}.$ First, we need a simple lemma
on the heat semigroup action in this space.

\begin{lemma}\label{elem1}
Assume that $\rho_0 \in K_{s,n},$ with $s\geq 0,n\geq 0.$ Then we have
\begin{equation}\label{mom1}
\|e^{t\Delta } \rho_0\|_{M_n} \leq C(1+t^{n/2})\|\rho_0\|_{M_n}, \,\,\, \|\nabla e^{t\Delta } \rho_0\|_{M_n} \leq C(t^{-1/2}+t^{(n-1)/2})\|\rho_0\|_{M_n};
\end{equation}
\begin{equation}\label{sob1}
\|e^{t\Delta } \rho_0\|_{H^s} \leq \|\rho_0\|_{H^s}, \,\,\,
\|\nabla e^{t\Delta } \rho_0\|_{H^s} \leq Ct^{-1/2} \|\rho_0\|_{H^s}.
\end{equation}
As a consequence,
\begin{equation}\label{kns1}
\|e^{t\Delta } \rho_0\|_{K_{s,n}} \leq C(1+t^{n/2})\|\rho_0\|_{K_{s,n}}, \,\,\,
\|\nabla e^{t\Delta } \rho_0\|_{K_{s,n}} \leq C(t^{-1/2}+t^{(n-1)/2})\|\rho_0\|_{K_{s,n}}.
\end{equation}
\end{lemma}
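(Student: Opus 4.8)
The plan is to treat the Sobolev estimates \eqref{sob1} and the weighted estimates \eqref{mom1} separately, and then obtain \eqref{kns1} simply by adding them, since $\|\cdot\|_{K_{s,n}}=\|\cdot\|_{H^s}+\|\cdot\|_{M_n}$ and the right-hand sides in \eqref{sob1} are dominated by those in \eqref{mom1}.

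The $H^s$ bounds \eqref{sob1} are immediate from Plancherel's theorem. On the Fourier side $e^{t\Delta}$ is multiplication by $e^{-t|\xi|^2}$, so $\widehat{e^{t\Delta}\rho_0}(\xi)=e^{-t|\xi|^2}\hat\rho_0(\xi)$ and $\widehat{\nabla e^{t\Delta}\rho_0}(\xi)=i\xi\,e^{-t|\xi|^2}\hat\rho_0(\xi)$. Since $0\le e^{-t|\xi|^2}\le 1$, the first bound follows after multiplying by $(1+|\xi|^2)^{s/2}$ and integrating; since $\sup_{\xi}|\xi|e^{-t|\xi|^2}\le Ct^{-1/2}$ (the maximum being attained at $|\xi|=(2t)^{-1/2}$, with value $(2t)^{-1/2}e^{-1/2}$), the same computation gives $\|\nabla e^{t\Delta}\rho_0\|_{H^s}\le Ct^{-1/2}\|\rho_0\|_{H^s}$.

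The substance of the lemma is the weighted estimate \eqref{mom1}, which I would prove directly from the convolution representation $e^{t\Delta}\rho_0=G_t*\rho_0$, where $G_t(x)=(4\pi t)^{-d/2}e^{-|x|^2/(4t)}$ is the heat kernel. Two ingredients are needed: the moment identities $\int_{\Rm^d}G_t(z)|z|^k\,dz=C_{k,d}\,t^{k/2}$ (by the scaling $z=\sqrt{t}\,w$), which combined with $|\nabla G_t(z)|=\tfrac{|z|}{2t}G_t(z)$ give $\int_{\Rm^d}|\nabla G_t(z)|\,|z|^k\,dz=C_{k,d}\,t^{(k-1)/2}$; and the elementary splitting $|x|^n\le 2^{n-1}(|x-y|^n+|y|^n)$, valid for integer $n\ge 0$ since $|x|\le|x-y|+|y|$ and $t\mapsto t^n$ is convex. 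For the function itself I bound $|G_t*\rho_0(x)|\le\int G_t(x-y)|\rho_0(y)|\,dy$, multiply by $(1+|x|^n)$, apply Fubini and the splitting, and carry out the $x$-integration using the moment identities; this produces the factor $1+Ct^{n/2}$ against $\int|\rho_0(y)|(1+|y|^n)\,dy$, i.e. the bound $C(1+t^{n/2})\|\rho_0\|_{M_n}$. For the gradient I put the derivative on the kernel, $\nabla(G_t*\rho_0)=(\nabla G_t)*\rho_0$ (justified since $\rho_0\in M_n\subset L^1$ and $\nabla G_t\in L^1$), and run the identical computation; now the moment identities for $|\nabla G_t|$ replace $t^{k/2}$ by $t^{(k-1)/2}$, so the three resulting terms scale like $t^{-1/2}$, $t^{(n-1)/2}$ and $t^{-1/2}|y|^n$, all dominated by $C(t^{-1/2}+t^{(n-1)/2})\|\rho_0\|_{M_n}$. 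Adding the function and gradient estimates yields \eqref{mom1}.

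This argument is essentially bookkeeping, so I do not expect a genuine obstacle; the one point requiring care is the choice, in the gradient estimate, to move the derivative onto $G_t$ rather than onto $\rho_0$. Writing $G_t*\nabla\rho_0$ would give the cleaner bound $C(1+t^{n/2})\|\rho_0\|_{M_n}$ with no singularity at $t=0$, but the singular form $C(t^{-1/2}+t^{(n-1)/2})$ stated here is the one needed later: the integrable $t^{-1/2}$ smoothing is exactly what lets the Duhamel iteration for \eqref{chemnew} gain the derivative appearing in the advection and chemotaxis terms while still closing in $K_{s,n}$. The only remaining verification is that the mixed term $t^{-1/2}|y|^n$, after integration against $|\rho_0|$, is absorbed into $(t^{-1/2}+t^{(n-1)/2})\|\rho_0\|_{M_n}$, which is immediate from $t^{-1/2}\le t^{-1/2}+t^{(n-1)/2}$.
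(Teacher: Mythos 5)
The paper does not actually supply a proof of this lemma --- it states only that ``the proof of Lemma~5.1 is elementary and we omit it'' --- so there is no argument of the authors' to compare against; your Plancherel argument for \eqref{sob1} and the heat-kernel computation (moment identities $\int G_t(z)|z|^k\,dz = C t^{k/2}$, $\int |\nabla G_t(z)|\,|z|^k\,dz = C t^{(k-1)/2}$, together with $|x|^n \le 2^{n-1}(|x-y|^n+|y|^n)$) for \eqref{mom1} are correct and are surely what the authors intend. The one bookkeeping point you gloss over is that $\|f\|_{M_n}$ contains $|\nabla f|$ as well as $|f|$, so each of the two inequalities in \eqref{mom1} requires one more derivative than your ``function'' and ``gradient'' computations explicitly treat: in the first inequality the term $\int |\nabla(G_t*\rho_0)|(1+|x|^n)\,dx$ must be written as $G_t*(\nabla\rho_0)$ (not $(\nabla G_t)*\rho_0$, which would produce the inadmissible singular factor $t^{-1/2}$) and then estimated by your function computation against $\int|\nabla\rho_0|(1+|y|^n)\,dy \le \|\rho_0\|_{M_n}$; in the second inequality the Hessian term is written as $(\nabla G_t)*(\nabla\rho_0)$ and handled by your gradient computation. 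Since you already identify both placements of the derivative and all the needed moment identities, this is a two-line addition rather than a gap, and the rest of the proposal, including the deduction of \eqref{kns1} by summing the two norms, is fine.
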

The proof of Lemma~\ref{elem1} is elementary and we omit it.

Next, we set up the contraction mapping argument for local existence. We will use the Banach space $X^T_{s,n} \equiv C(K_{s,n},[0,T])$ with a sufficiently small $T>0.$
Let us rewrite the equation \eqref{chemnew} in an integral form using the Duhamel principle.
\begin{equation}\label{duhamel}
\rho(x,t) = e^{t\Delta } \rho_0(x) + \int_0^t e^{  (t-s)\Delta} \left( -\nabla \cdot (u\rho) - \epsilon \rho^q +
\chi\nabla\cdot (\rho \nabla \Delta^{-1}\rho)\right)\,ds.
\end{equation}
Let us denote
\[ B_t (\rho) \equiv \int_0^t e^{  (t-s)\Delta} \left( -\nabla \cdot (u\rho) -
\epsilon \rho^q + \chi \nabla\cdot (\rho \nabla \Delta^{-1}\rho)\right)\,ds.
\]
We need the following auxiliary estimates.
\begin{lemma}\label{Blem}
Assume that $q,s,n$ are positive integers and $s>\frac{d}{2}+1.$ Let $f,g \in K_{s,n}.$ Then
\begin{eqnarray}\label{powsob}
\|f^q - g^q\|_{H^s} \leq C(\|f\|_{H^s}^{q-1}+\|g\|_{H^s}^{q-1})\|f-g\|_{H^s} \\
\label{chemsob} \|f\nabla \Delta^{-1}f - g \nabla \Delta^{-1}g \|_{H^s} \leq C(\|f\|_{H^s}+\|g\|_{H^s})\|f-g\|_{H^s} \\
\label{powmom} \|f^q-g^q\|_{M_n} \leq C(\|f\|_{H^s}^{q-1}+\|g\|_{H^s}^{q-1})\|f-g\|_{M_n} \\
\label{chemmom} \|f\nabla \Delta^{-1}f - g \nabla \Delta^{-1}g \|_{M_n} \leq C(\|f\|_{H^s}+\|g\|_{H^s}+\|g\|_{M_n})(\|f-g\|_{M_n}+\|f-g\|_{H^s}).
\end{eqnarray}
All constants in the inequalities may depend only on $q,d,s$ and $n.$
\end{lemma}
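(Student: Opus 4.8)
\emph{Strategy and main tools.} Since $s>d/2+1$ we have the Sobolev embedding $H^s(\Rm^d)\hookrightarrow W^{1,\infty}(\Rm^d)$, so $\|h\|_{L^\infty}+\|\nabla h\|_{L^\infty}\le C\|h\|_{H^s}$, and since $s>d/2$ the space $H^s$ is a Banach algebra, $\|hk\|_{H^s}\le C\|h\|_{H^s}\|k\|_{H^s}$. For the chemotactic terms I will use the two Fourier multipliers $\nabla\Delta^{-1}$ (symbol $-i\xi/|\xi|^2$, order $-1$) and $\nabla^2\Delta^{-1}$ (a matrix of Riesz transforms, symbol $-\xi\otimes\xi/|\xi|^2$, order $0$): the latter is bounded on every $H^s$, while for the former I record the pointwise bound $\|\nabla\Delta^{-1}h\|_{L^\infty}\le C(\|h\|_{H^s}+\|h\|_{L^1})$, obtained by splitting its $d=2$ kernel $\sim|x|^{-1}$ into a near part $|x-y|\le1$ (integrable, controlled by $\|h\|_{L^\infty}$) and a far part (controlled by $\|h\|_{L^1}$), together with $\|h\|_{L^1}\le\|h\|_{M_n}$.

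\emph{The polynomial terms \eqref{powsob} and \eqref{powmom}.} I would do these first. Writing $f^q-g^q=(f-g)P$ with $P=\sum_{j=0}^{q-1}f^jg^{q-1-j}$ reduces both to products. For \eqref{powsob} the algebra property gives $\|f^jg^{q-1-j}\|_{H^s}\le C\|f\|_{H^s}^j\|g\|_{H^s}^{q-1-j}$, and each monomial is $\le\|f\|_{H^s}^{q-1}+\|g\|_{H^s}^{q-1}$. For \eqref{powmom} I keep $f-g$ (and $\nabla(f-g)$) inside the weighted integral: from $\nabla(f^q-g^q)=\nabla(f-g)\,P+(f-g)\,\nabla P$, and the fact that the coefficients $P,\nabla P$ are polynomials in $f,g,\nabla f,\nabla g$ with $\|P\|_{L^\infty}+\|\nabla P\|_{L^\infty}\le C(\|f\|_{H^s}^{q-1}+\|g\|_{H^s}^{q-1})$, what remains is exactly $\int(|f-g|+|\nabla(f-g)|)(1+|x|^n)\,dx=\|f-g\|_{M_n}$.

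\emph{The chemotactic terms \eqref{chemsob} and \eqref{chemmom}.} Everything starts from the bilinear splitting
\[
f\nabla\Delta^{-1}f-g\nabla\Delta^{-1}g=f\,\nabla\Delta^{-1}(f-g)+(f-g)\,\nabla\Delta^{-1}g .
\]
For \eqref{chemsob} I apply Leibniz to $\partial^\alpha$, $|\alpha|\le s$. Every term in which at least one derivative hits the potential factor contains $\nabla^2\Delta^{-1}$ of a function and is controlled by the order-zero boundedness of $\nabla^2\Delta^{-1}$ on $L^2$ together with algebra-type product estimates; the only delicate term carries all derivatives on the local factor, and I bound it in $L^2\times L^\infty$ using $\|\nabla\Delta^{-1}h\|_{L^\infty}$ above. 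For \eqref{chemmom} — which I expect to be the main obstacle — the crucial point is that $\nabla\Delta^{-1}h$ decays only like $|x|^{-1}$, hence is \emph{not} in $M_n$ for $n\ge1$; a weighted bound is possible only if the spatial decay of a \emph{local} factor stays attached to the nonlocal one. I therefore never move the weight $1+|x|^n$ onto $\nabla\Delta^{-1}(\cdot)$ alone: in the term $(f-g)\nabla\Delta^{-1}g$ I pull out $\|\nabla\Delta^{-1}g\|_{L^\infty}\le C(\|g\|_{H^s}+\|g\|_{M_n})$ and $\|\nabla^2\Delta^{-1}g\|_{L^\infty}\le C\|g\|_{H^s}$ (Riesz boundedness on $H^s$, then embed), leaving $\|f-g\|_{M_n}$ — this is precisely where the factor $\|g\|_{M_n}$ in the statement comes from; in the term $f\,\nabla\Delta^{-1}(f-g)$ I keep the decaying factor $f$ carrying the weight and bound $\nabla\Delta^{-1}(f-g),\nabla^2\Delta^{-1}(f-g)$ in $L^\infty$ by $C(\|f-g\|_{H^s}+\|f-g\|_{M_n})$. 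To make the weighted convolution estimates rigorous I would use $1+|x|^n\le C\big(1+|x-y|^n+|y|^n\big)$ inside $\nabla\Delta^{-1}h(x)=c\int|x-y|^{-2}(x-y)h(y)\,dy$ and split the kernel into its near part (integrable in $d=2$, with $|x-y|^n$ absorbed harmlessly since $|x-y|\le1$) and far part ($|x-y|^{-1}\le1$, tail controlled by the decay of $h$). The genuinely singular object is $\nabla^2\Delta^{-1}h$, whose kernel is Calder\'on--Zygmund; there I avoid a direct kernel estimate and instead use its $L^\infty$ bound through $H^s$. The remaining bookkeeping is routine.
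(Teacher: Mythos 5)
Your proposal is correct and follows essentially the same route as the paper: the telescoping factorization $f^q-g^q=(f-g)\sum_j f^jg^{q-1-j}$ combined with the $H^s$ algebra property and the embedding $H^s\hookrightarrow W^{1,\infty}$ for \eqref{powsob} and \eqref{powmom}, and a bilinear splitting of $f\nabla\Delta^{-1}f-g\nabla\Delta^{-1}g$ together with the kernel-split bound $\|\nabla\Delta^{-1}h\|_{L^\infty}\le C(\|h\|_{L^\infty}+\|h\|_{L^1})$ for the chemotactic terms (the paper handles the gradient part of the $M_n$ norm via the identity $\nabla\cdot(f\nabla\Delta^{-1}f)=\nabla f\cdot\nabla\Delta^{-1}f+f^2$, which is the same content as your Leibniz/Riesz-transform bookkeeping). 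The only difference is that your splitting $f\,\nabla\Delta^{-1}(f-g)+(f-g)\,\nabla\Delta^{-1}g$ is the mirror image of the paper's $(f-g)\,\nabla\Delta^{-1}f+g\,\nabla\Delta^{-1}(f-g)$, so your version of \eqref{chemmom} naturally carries $\|f\|_{M_n}$ where the statement has $\|g\|_{M_n}$ --- immaterial for the contraction-mapping application in which the lemma is used.
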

\begin{proof}
All these estimates are fairly straightforward. The estimate \eqref{powsob} follows from writing $f^q-g^q = (f-g)(f^{q-1}+\dots+g^{q-1})$
and the fact that $H^s$ is an algebra when $s>d/2$ (see, e.g. \cite{Ziem}). The estimate \eqref{chemsob} follows from a similar argument.
The third inequality \eqref{powmom} is proved  by the same expansion and use of Sobolev imbedding implying $\|f\|_{L^\infty}+\|\nabla f\|_{L^\infty} \leq C\|f\|_{H^s}$
and similar bounds for $g.$ Finally, to prove the last inequality \eqref{chemmom}, write
\[ f\nabla \Delta^{-1}f - g\nabla \Delta^{-1}g = (f-g)\nabla \Delta^{-1}f + g (\nabla \Delta^{-1} f - \nabla \Delta^{-1}g). \]
Integral of the right hand side expression against $(1+|x|^n)$ does not exceed
\[ \|f-g\|_{M_n}\|\nabla \Delta^{-1} f\|_{L^\infty} + \|g\|_{M_n} \|\nabla \Delta^{-1}(f-g)\|_{L^\infty} \leq
C(\|f\|_{H^s}+\|g\|_{M_n})(\|f-g\|_{M_n}+\|f-g\|_{H^s}). \]
For the case of the gradient, observe that
\[ \nabla \cdot(f\nabla \Delta^{-1}f - g\nabla \Delta^{-1}g) = (\nabla f\cdot\nabla \Delta^{-1}f -
\nabla g\cdot\nabla \Delta^{-1}g)+(f^2-g^2). \]
The first two terms are then controlled similarly to the previous estimate, while the last two terms are easy to handle.
\end{proof}
Now we can prove a key Lemma setting up contraction mapping.
\begin{lemma}\label{keycontr}
Suppose that $u \in C^\infty(\Rm^d \times [0,\infty))$ and $\nabla \cdot u =0.$ Let $s,q$ and $n$ be positive integers, $s>\frac{d}{2}+1.$
Let $f,g \in X^T_{s,n}.$ Then
\begin{equation}\label{contr1}
\|B_T(f)-B_T(g)\|_{X^T_{s,n}} \leq \alpha \|f-g\|_{X^T_{s,n}},
\end{equation}
where for $T \leq 1,$ we have
\begin{equation}\label{alphacon}
\alpha \leq C(d,q,n,\epsilon,\chi)\max_{0 \leq t \leq T}\left(\|u(\cdot,t)\|_{C^s}+\|f(\cdot,t)\|_{K_{s,n}}^{q-1}+\|g(\cdot,t)\|_{K_{s,n}}^{q-1}+\|f(\cdot,t)\|_{K_{s,n}}+\|g(\cdot,t)\|_{K_{s,n}}\right)T^{1/2}.
\end{equation}
\end{lemma}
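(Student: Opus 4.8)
The plan is to estimate the $K_{s,n}$ norm of $B_t(f)-B_t(g)$ at each fixed $t\in[0,T]$ and then take the supremum over $t$. Because $B_t$ depends linearly on its integrand, I write
\[
B_t(f)-B_t(g) = \int_0^t e^{(t-s)\Delta}\Big( -\nabla\cdot(u(f-g)) - \epsilon(f^q-g^q) + \chi\,\nabla\cdot\big(f\nabla\Delta^{-1}f - g\nabla\Delta^{-1}g\big)\Big)\,ds
\]
and treat the three source terms separately. The guiding principle is to move every divergence onto the heat semigroup, so that the singular gradient bound in Lemma~\ref{elem1} produces an integrable factor $(t-s)^{-1/2}$, while the nonlinear differences themselves are controlled in $H^s$ and $M_n$ by Lemma~\ref{Blem}.

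For the advection term I would use that $\nabla\cdot$ commutes with $e^{(t-s)\Delta}$ together with the second bound in \eqref{kns1}, giving $\|e^{(t-s)\Delta}\nabla\cdot(u(f-g))\|_{K_{s,n}} \le C\big((t-s)^{-1/2}+(t-s)^{(n-1)/2}\big)\|u(f-g)\|_{K_{s,n}}$. Since $H^s$ is an algebra for $s>d/2$ and $u,\nabla u$ are bounded, multiplication by $u$ is bounded on both $H^s$ and $M_n$, so $\|u(f-g)\|_{K_{s,n}}\le C\|u(\cdot,s)\|_{C^s}\|f-g\|_{K_{s,n}}$. For $T\le 1$ the factor $(t-s)^{(n-1)/2}$ is bounded, and $\int_0^t (t-s)^{-1/2}\,ds = 2t^{1/2}\le 2T^{1/2}$, so this term contributes $C\max_s\|u\|_{C^s}\,T^{1/2}\|f-g\|_{X^T_{s,n}}$. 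The reaction term carries no derivative, so I would instead use the first bound in \eqref{kns1}, namely $\|e^{(t-s)\Delta}(f^q-g^q)\|_{K_{s,n}}\le C(1+(t-s)^{n/2})\|f^q-g^q\|_{K_{s,n}}$, combined with \eqref{powsob} and \eqref{powmom}; the integrand is bounded on $[0,t]$, so integration produces a factor $t\le T^{1/2}$ (using $T\le 1$) and a coefficient $\epsilon(\|f\|_{K_{s,n}}^{q-1}+\|g\|_{K_{s,n}}^{q-1})$. The chemotaxis term is handled exactly like advection: commute the divergence, invoke the gradient bound in \eqref{kns1} to generate $(t-s)^{-1/2}$, and estimate the nonlinear difference in $H^s$ and $M_n$ via \eqref{chemsob} and \eqref{chemmom}, yielding a coefficient $\chi(\|f\|_{K_{s,n}}+2\|g\|_{K_{s,n}})$ and, after integrating the square-root singularity, another $T^{1/2}$.

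Summing the three contributions and taking the supremum over $t\in[0,T]$ yields \eqref{contr1} with $\alpha$ bounded as in \eqref{alphacon}; the constants $\epsilon$ and $\chi$ are absorbed into $C(d,q,n,\epsilon,\chi)$, the powers $\|f\|^{q-1},\|g\|^{q-1}$ come from the reaction term and the linear powers from the chemotaxis and advection terms. The hypothesis $T\le 1$ is precisely what lets me absorb the moment-growth factors $(1+t^{n/2})$ and $(t-s)^{(n-1)/2}$ into the constant and replace every power of $T$ by $T^{1/2}$. The main obstacle is the bookkeeping of the moment norm $M_n$ in the chemotaxis term: estimate \eqref{chemmom} couples $\|\cdot\|_{M_n}$ and $\|\cdot\|_{H^s}$ and relies on an $L^\infty$ bound for $\nabla\Delta^{-1}(f-g)$, so one must propagate both norms through the Duhamel integral simultaneously rather than splitting $K_{s,n}$ additively. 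The only genuinely analytic point is the borderline integrability of $(t-s)^{-1/2}$, which is exactly integrable and is responsible for the gain of $T^{1/2}$ that makes $B_T$ a contraction once $T$ is chosen small.
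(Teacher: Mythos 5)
Your proposal is correct and follows essentially the same route as the paper: the same three-way splitting of the Duhamel integrand, the same use of the heat-semigroup bounds of Lemma~\ref{elem1} (with the divergence moved onto the semigroup to produce the integrable $(t-s)^{-1/2}$ singularity) and of the product/difference estimates of Lemma~\ref{Blem}, and the same use of $T\le 1$ to absorb the moment-growth powers and reduce everything to $T^{1/2}$. Your remark about propagating the $H^s$ and $M_n$ norms jointly through the integral is exactly how the paper handles it, by estimating directly in the combined $K_{s,n}$ norm.
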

\begin{proof}
Consider
\[ B_t(f)-B_t(g) = \int_0^t e^{\Delta(t-r)} \left( \nabla (u(f-g)) - \epsilon (f^q-g^q)+\chi\nabla(f \nabla \Delta^{-1}f -g\nabla \Delta^{-1}g) \right)\,dr. \]
Using Lemmas~\ref{elem1} and \ref{Blem}, we find
\begin{eqnarray}
&&\|B_t(f)-B_t(g)\|_{K_{s,n}} \leq
C \int_0^t \left[ \left((t-r)^{-1/2}+(t-r)^{(n-1)/2}\right)\left(\|u\|_{C^s}+\|f\|_{K_{s,n}}+\|g\|_{K_{s,n}}\right)
\right. \nonumber\\
&& +\left.  \left(1+(t-r)^{n/2}\right)\left(\|f\|_{K_{s,n}}^{q-1}+\|g\|_{K_{s,n}}^{q-1}\right) \right] \|f-g\|_{K_{s,n}}\,dr
\nonumber\\
&&\le C\left[ \left( t^{1/2}+t^{(n+1)/2} \right) {\rm max}_{0 \leq r \leq t}\left( \|u\|_{C^s}+\|f\|_{K_{s,n}}+\|g\|_{K_{s,n}}\right)
 \right.
 \nonumber \\  &&
 +\left. \left(t+t^{(n+2)/2}\right){\rm max}_{0 \leq r \leq t}\left(\|f\|_{K_{s,n}}^{q-1}+\|g\|_{K_{s,n}}^{q-1}\right) \right] {\rm max}_{0 \leq r \leq t}
\|f-g\|_{K_{s,n}}. \label{lastal}
\end{eqnarray}
Therefore, we obtain \eqref{contr1}. For $T \leq 1$ we can ignore the higher powers of $T$ and the estimate \eqref{alphacon} for $\alpha$ follows from
\eqref{lastal}.
\end{proof}
In a standard way, Lemma~\ref{keycontr} implies existence of local solution via the contraction mapping principle.
\begin{theorem}\label{locexthm}
Assume $q,s,n$ are positive integers and $s>\frac{d}{2}+1,$ $u \in C^\infty(\Rm^d \times [0,\infty)),$ $\nabla \cdot u=0.$ Suppose $\rho_0 \in K_{s,n}.$
Then there exists $T=T(q,d,u,s,\epsilon,\chi,\|\rho_0\|_{K_{s,n}})$ such
that there exists a unique solution $\rho(x,t) \in X^T_{s,n}$ of the equation \eqref{duhamel} satisfying $\rho(x,0)=\rho_0(x).$
\end{theorem}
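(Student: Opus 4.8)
The plan is to realize \eqref{duhamel} as a fixed point equation and apply the Banach contraction principle in a closed ball of $X^T_{s,n}$. Define the map
\[
\Phi(\rho)(t) = e^{t\Delta}\rho_0 + B_t(\rho),
\]
so that a fixed point of $\Phi$ in $X^T_{s,n}$ is precisely a solution of \eqref{duhamel} attaining the initial data $\rho_0$. I would work on the closed ball $\bar B_R = \{\rho \in X^T_{s,n} : \|\rho\|_{X^T_{s,n}} \leq R\}$, which is a complete metric space, and choose $R$ and $T$ so that $\Phi$ maps $\bar B_R$ into itself and is a contraction there.

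First I would fix $R$ using the linear term. By \eqref{kns1} in Lemma~\ref{elem1}, for $T \leq 1$ we have $\max_{0\le t\le T}\|e^{t\Delta}\rho_0\|_{K_{s,n}} \leq 2C\|\rho_0\|_{K_{s,n}}$, so I set $R = 4C\|\rho_0\|_{K_{s,n}}$ (any fixed multiple strictly larger than the linear bound works). The observation that makes the self-mapping property essentially free is that $B_t(0)=0$, since every term in the definition of $B_t$ vanishes at $\rho \equiv 0$. Hence Lemma~\ref{keycontr} applied with $g=0$ gives $\|B_T(\rho)\|_{X^T_{s,n}} \leq \alpha\|\rho\|_{X^T_{s,n}} \le \alpha R$ for $\rho \in \bar B_R$, where by \eqref{alphacon}
\[
\alpha \leq C(d,q,n,\epsilon,\chi)\Big(\sup_{0\le t\le T}\|u(\cdot,t)\|_{C^s} + R^{q-1} + R\Big)T^{1/2}.
\]
Choosing $T$ small enough, depending on $q,d,s,\epsilon,\chi$, on $\sup_{0\le t \le 1}\|u(\cdot,t)\|_{C^s}$, and on $R$ (hence on $\|\rho_0\|_{K_{s,n}}$), so that $\alpha \leq 1/2$, I obtain $\|\Phi(\rho)\|_{X^T_{s,n}} \leq \tfrac{R}{2} + \tfrac{R}{2} = R$, that is, $\Phi(\bar B_R) \subset \bar B_R$.

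With the same $T$, Lemma~\ref{keycontr} directly yields the contraction estimate $\|\Phi(f)-\Phi(g)\|_{X^T_{s,n}} = \|B_T(f)-B_T(g)\|_{X^T_{s,n}} \leq \alpha\|f-g\|_{X^T_{s,n}}$ with $\alpha \le 1/2 < 1$ for all $f,g \in \bar B_R$. The Banach fixed point theorem then produces a unique $\rho \in \bar B_R$ solving \eqref{duhamel}, which is the asserted local solution.

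Finally, to upgrade uniqueness from the ball $\bar B_R$ to all of $X^T_{s,n}$, I would use a standard continuation argument: given two solutions in $X^T_{s,n}$, both are continuous in $t$ with values in $K_{s,n}$, so on a sufficiently short initial interval both lie in a common ball and the contraction estimate forces them to coincide there; since their $K_{s,n}$ norms are bounded on $[0,T]$, the admissible interval length can be taken uniform, and iterating covers $[0,T]$ in finitely many steps. I do not expect any genuine obstacle here: all of the analytic difficulty has already been absorbed into Lemmas~\ref{elem1} and \ref{keycontr}, and what remains is the bookkeeping of choosing $R$ and $T$ together with the routine verification that $B_t(0)=0$, which is exactly what lets the contraction estimate serve simultaneously as the self-mapping estimate.
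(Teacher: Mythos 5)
Your proposal is correct and is exactly the argument the paper intends: the paper proves Theorem~\ref{locexthm} only by the remark that ``Lemma~\ref{keycontr} implies existence of local solution via the contraction mapping principle,'' and your write-up supplies the standard details of that scheme (the ball $\bar B_R$ sized by the linear term, the self-mapping bound via $B_t(0)=0$ and Lemma~\ref{keycontr} with $g=0$, the choice of $T$ making $\alpha\le 1/2$, and the continuation argument upgrading uniqueness from the ball to all of $X^T_{s,n}$). No substantive difference from the paper's route.
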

\it Remark. \rm Higher regularity of the solution in space and time (in particular implying $\rho(x,t) \in C(H^m, (0,T])$ for every $m>0$) follows from
Theorem~\ref{locexthm} and standard parabolic regularity estimates applied iteratively.
\begin{corollary}\label{hscon}
If under conditions of Theorem~\ref{locexthm} we prove global a-priori estimate on $\|\rho(\cdot,t)\|_{H^s}$ and $\|\rho(\cdot,t)\|_{M_n},$ then the local solution can be
extended globally to $X^T_{n,s}$ with arbitrary $T.$
\end{corollary}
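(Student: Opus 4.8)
The plan is to run the standard continuation (bootstrap) argument, exploiting the fact that the local existence time furnished by Theorem~\ref{locexthm} depends on the initial datum only through $\|\rho_0\|_{K_{s,n}}$. The crucial structural observation is that on a bounded set of initial norms this existence time is bounded below by a positive constant. Inspecting the proof of Theorem~\ref{locexthm}, one sees that $T$ is chosen so that the contraction and self-mapping conditions hold on a ball in $X^T_{s,n}$ of radius comparable to $\|\rho_0\|_{K_{s,n}}$; by \eqref{alphacon} the contraction constant obeys $\alpha \le C(d,q,n,\epsilon,\chi)\,(\cdots)\,T^{1/2}$, where the prefactor $(\cdots)$ is controlled once $\|\rho_0\|_{K_{s,n}}$ and $\max_{t\le T}\|u(\cdot,t)\|_{C^s}$ are bounded. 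Hence if $\|\rho_0\|_{K_{s,n}}\le M$ and $\max_{t\le T}\|u(\cdot,t)\|_{C^s}\le U$, there is a single $\delta=\delta(q,d,s,n,\epsilon,\chi,M,U)>0$, obtained by setting the right-hand side of \eqref{alphacon} (with the norms replaced by their bounds in terms of $M,U$) equal to $1/2$, that serves as a valid existence time for every such datum.

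First I would define the maximal existence time $T^\ast=\sup\{\,T>0:\ \eqref{chemnew}\ \text{has a solution in}\ X^T_{s,n}\,\}$ and argue by contradiction, assuming $T^\ast<\infty$. By hypothesis the global a-priori bounds $\sup_{t<T^\ast}\|\rho(\cdot,t)\|_{H^s}<\infty$ and $\sup_{t<T^\ast}\|\rho(\cdot,t)\|_{M_n}<\infty$ hold; adding them gives $M:=\sup_{t<T^\ast}\|\rho(\cdot,t)\|_{K_{s,n}}<\infty$. Since $u\in C^\infty(\Rm^d\times[0,\infty))$ has all derivatives bounded uniformly on $[0,T^\ast]$ (Remark~5 following Theorem~\ref{chem1}), the quantity $U:=\max_{t\le T^\ast}\|u(\cdot,t)\|_{C^s}$ is finite as well.

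Next I would re-initialize the local theory at a time $t_0<T^\ast$: applying Theorem~\ref{locexthm} with $\rho(\cdot,t_0)$ as initial datum and the shifted field $u(x,t_0+\cdot)$ as the advection, whose $C^s$ norms over the short interval $[0,\delta]$ are again bounded by $U$. Because $\|\rho(\cdot,t_0)\|_{K_{s,n}}\le M$, the solution launched at $t_0$ exists on $[t_0,t_0+\delta]$ with the same $\delta$ as above, \emph{independent of} $t_0$. Choosing $t_0\in(T^\ast-\delta/2,\,T^\ast)$, this solution reaches time $t_0+\delta>T^\ast$; by the uniqueness assertion of Theorem~\ref{locexthm} it agrees with $\rho$ on their common interval and therefore extends $\rho$ to $[0,t_0+\delta]$ with $t_0+\delta>T^\ast$, contradicting the definition of $T^\ast$. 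Thus $T^\ast=\infty$, and the solution lies in $X^T_{s,n}$ for every $T$. The smoothness $\rho\in C^\infty(\Rm^d\times(0,\infty))$ then follows from the parabolic regularity remark after Theorem~\ref{locexthm} applied on each finite subinterval.

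The one point demanding care — the main obstacle — is precisely the uniform-in-$t_0$ lower bound on the local existence time $\delta$. Everything hinges on the \emph{monotone} dependence of the admissible $T$ on $\|\rho_0\|_{K_{s,n}}$ and on $\max_{t\le T}\|u\|_{C^s}$; the estimate \eqref{alphacon} makes this dependence explicit, so that once $M$ and $U$ are fixed a single step size $\delta\le\min(1,\delta_0)$ works at every restart. Without this quantitative, norm-only dependence of $\delta$ the repeated restarting could yield steps shrinking to zero and the argument would fail, so verifying it from \eqref{alphacon} is the essential content of the proof.
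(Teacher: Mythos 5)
Your proposal is correct and is exactly the argument the paper has in mind: the paper's own justification is the one-line remark that one ``can just extend it by iterative application of the local result,'' relying precisely on the fact that the existence time in Theorem~\ref{locexthm} depends on the data only through $\|\rho_0\|_{K_{s,n}}$ (and $u,q,d,s,\epsilon,\chi$), so a uniform step size is available once the a-priori bounds hold. You have simply written out the standard continuation/contradiction argument in full, including the point about re-initializing with the time-shifted velocity field, which is the right way to make the remark rigorous.
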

Indeed, if there is a control on growth $H^s$ and $M_n$ norms of the solution, we can just extend it by iterative application of local result as far as we want.
To prove the bounds on $M_n$ and $H^s$ norms of the solution, we first establish control of the $L^\infty$ norm.
\begin{lemma}\label{linfty}
Assume that $\rho(x,t)$ is the local solution guaranteed by Theorem~\ref{locexthm}. Then
\begin{equation}\label{linfrho}
\|\rho(\cdot,t)\|_{L^\infty} \leq N_0 \equiv {\rm max}\left( (\chi/\epsilon)^{\frac{1}{q-2}}, \|\rho_0\|_{L^\infty} \right)
\end{equation}
for all $0 \leq t \leq T.$
\end{lemma}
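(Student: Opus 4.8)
The plan is to recast \eqref{chemnew} as a linear advection--diffusion equation with a reaction nonlinearity and then exploit the sign of the reaction at the threshold $N_0$ through a truncation energy estimate. First I would expand the chemotactic divergence: since $\Delta\Delta^{-1}\rho=\rho$,
\[
\chi\nabla\cdot(\rho\nabla\Delta^{-1}\rho)=\chi\,\nabla\Delta^{-1}\rho\cdot\nabla\rho+\chi\rho^2 ,
\]
so that, introducing the drift $V=-u+\chi\nabla\Delta^{-1}\rho$ and the reaction $g(\rho)=\chi\rho^2-\epsilon\rho^q$, equation \eqref{chemnew} becomes
\[
\partial_t\rho=\Delta\rho+V\cdot\nabla\rho+g(\rho).
\]
Two structural facts drive everything. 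Using $\nabla\cdot u=0$ and $\Delta\Delta^{-1}\rho=\rho$, the drift satisfies $\nabla\cdot V=\chi\rho\ge 0$. And since $g(\rho)=\rho^2(\chi-\epsilon\rho^{q-2})$ with $q>2$, we have $g(\rho)\le 0$ whenever $\rho\ge(\chi/\epsilon)^{1/(q-2)}$; in particular $g(\rho)\le 0$ for $\rho\ge N_0$.

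Next I would run the standard Stampacchia truncation estimate. Set $w=(\rho-N_0)_+$. Because $\rho(\cdot,t)\in K_{s,n}$ with $s>d/2+1$, Sobolev embedding shows $\rho(\cdot,t)$ is continuous and vanishes at spatial infinity, so $\{\rho>N_0\}$ is bounded and $w(\cdot,t)$ has compact support, which makes all integrations by parts below free of boundary terms. Multiplying the equation by $w$ and integrating over $\Rm^d$ gives
\[
\frac12\frac{d}{dt}\int_{\Rm^d}w^2\,dx=-\int_{\Rm^d}|\nabla w|^2\,dx-\frac{\chi}{2}\int_{\Rm^d}\rho\,w^2\,dx+\int_{\Rm^d}g(\rho)\,w\,dx,
\]
where for the diffusion term I use $\nabla w=\nabla\rho$ a.e.\ on $\{w>0\}$ and $\nabla w=0$ a.e.\ on $\{w=0\}$, and for the drift term $\int V\cdot\nabla\rho\,w=\tfrac12\int V\cdot\nabla(w^2)=-\tfrac12\int(\nabla\cdot V)w^2=-\tfrac{\chi}{2}\int\rho w^2$. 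Each of the three terms is nonpositive: the first trivially; the second since $\rho\ge 0$ and $\chi>0$; and the third since $g(\rho)w$ is supported on $\{\rho>N_0\}$, where $g(\rho)\le 0$ while $w\ge 0$. Hence $\frac{d}{dt}\|w(\cdot,t)\|_{L^2}^2\le 0$. As $N_0\ge\|\rho_0\|_{L^\infty}$ gives $w(\cdot,0)=(\rho_0-N_0)_+=0$, the nonincreasing quantity $\|w(\cdot,t)\|_{L^2}^2$ starts at zero, so $w\equiv 0$, i.e.\ $\rho(\cdot,t)\le N_0$, which is \eqref{linfrho}.

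The routine justifications are that $\rho$ is smooth for $t>0$ (by the remark after Theorem~\ref{locexthm}, so the time differentiation and manipulations are legitimate) and continuous in $K_{s,n}$ up to $t=0$ (which transfers the conclusion to the closed interval). The only genuinely delicate point I expect is ensuring the drift is regular enough to integrate by parts: in $d=2$ the field $\nabla\Delta^{-1}\rho$ is a singular-integral transform of $\rho$, but since $w$ has compact support and $\rho$ is smooth and decaying, $V$ is smooth and bounded on $\operatorname{supp}w$, so the boundary terms vanish and no difficulty arises. Thus the heart of the argument is the two sign observations---the favorable sign of $\nabla\cdot V=\chi\rho$ after the cancellation using $\nabla\cdot u=0$, and the nonpositivity of $g$ above the threshold $(\chi/\epsilon)^{1/(q-2)}$---while the remainder is bookkeeping.
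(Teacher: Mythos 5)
Your argument is correct, and it takes a genuinely different route from the paper. The paper proves the bound by a pointwise maximum-principle argument: it assumes a first time $t_1$ at which $\|\rho(\cdot,t_1)\|_{L^\infty}$ reaches some $N_1>N_0$, spends most of its effort showing that the supremum is actually \emph{attained} at some $x_0$ (ruling out escape to infinity via the $H^s$ bound, a Poincar\'e-type inequality on disjoint unit balls, and the moment bound from $M_n$), and then evaluates the equation at $(x_0,t_1)$, where $\nabla\rho=0$, $\Delta\rho\le 0$, and the surviving terms give $\partial_t\rho\le\rho^2(\chi-\epsilon\rho^{q-2})<0$. Your Stampacchia truncation replaces that pointwise step by an $L^2$ energy estimate on $w=(\rho-N_0)_+$, which neatly sidesteps the attainment-of-supremum issue altogether (the set $\{\rho>N_0\}$ is automatically bounded since $N_0>0$ and $\rho\in H^s\hookrightarrow C_0$); the two sign observations you isolate --- $\nabla\cdot V=\chi\rho\ge 0$ and $g(\rho)\le 0$ above the threshold --- are exactly the same structural facts the paper uses at the maximum point, just deployed in integrated form. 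Two small remarks: in the drift term you invoke $\rho\ge 0$ globally, but you only need $\rho>N_0\ge 0$ on $\operatorname{supp}w$, so that step is self-contained; and your argument, as written, only yields the one-sided bound $\rho\le N_0$, whereas the stated conclusion bounds $|\rho|$ --- the paper dismisses the minimum case with a one-line ``considered similarly,'' and to be complete you should either add the symmetric truncation argument showing $\rho\ge 0$ is preserved from $\rho_0\ge 0$ (a Gronwall estimate on $\|\rho_-\|_{L^2}^2$ using the a priori boundedness of $\rho$ on $[0,T]$ suffices) or note that nonnegativity is assumed throughout the paper's applications.
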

\begin{proof}
Assume this is false, and there exists $N_1>N_0$ and $0<t_1 \leq T$ such that we have $\|\rho(x,t_1)\|_{L^\infty} = N_1$ for the first time
(that is, for all $x$ and $0 \leq t \leq t_1,$ $|\rho(x,t_1)| \leq N_1$). We claim that in this case there exists $x_0$ such that $\rho(x_0,t_1)=N_1.$
Indeed, the only alternative is that there exists a sequence $x_k$ such that $\rho(x_k,t_1) \rightarrow N_1$ as $k \rightarrow \infty.$
If $x_k$ has finite accumulation points, set one of them as $x_0.$ By continuity $\rho(x_0,t_1)$ will be equal to $N_1.$ Thus it remains to consider
the case where $x_k \rightarrow \infty$ and passing to a subsequence if necessary we can assume that unit balls around $x_k,$ $B_1(x_k),$ are disjoint.
By a version of Poincare inequality (see e.g. \cite{Ziem}), we have $\|\rho - \overline{\rho}\|^2_{L^\infty(B_1(x_k))} \leq C\|\rho\|^2_{H^s(B_1(x_k))}.$
Since $\sum_k \|\rho\|^2_{H^s(B_1(x_k))} \leq C(t_1) < \infty,$ we get that
\[ \overline{\rho}_k \equiv \frac{1}{|B_1(x_k)|}\int_{B_1(x_k)}\rho\,dx \stackrel{k \rightarrow \infty}{\longrightarrow} N_1. \]
But this is a contradiction with $\int_{\Rm^d} |\rho(x)|(1+|x|^n)\,dx \leq C(t_1).$

Therefore, there exists $x_0$ such that $\rho(x_0,t_1) = N_1$
(we consider the case of a maximum; the case of minimum equal to $-N_1$ is considered similarly).
Then
\begin{eqnarray*}
 &&\left. \partial_t \rho(x_0,t) \right|_{t=t_1} =  (u \cdot \nabla) \rho(x_0,t_1) + \Delta \rho(x_0,t_1) + \chi \nabla \rho(x_0,t_1) \cdot
\nabla \Delta^{-1}\rho(x_0,t_1) \\
&&~~~~~~~~~~~~+ \chi \rho(x_0,t_1)^2-\epsilon \rho(x_0,t_1)^q \leq \rho(x_0,t_1)^2(\chi - \epsilon \rho(x_0,t_1)^{q-2}).
\end{eqnarray*}
By assumption on $N_1,$ we see that $\partial_t \rho(x_0,t_1) <0,$ contradiction with our choice of $t_1.$
\end{proof}

Let us first prove an upper bound on the growth of the $M_n$ norm of the solution.
\begin{lemma}\label{moments}
Assume that $\rho(x,t)$ is the local solution guaranteed by Theorem~\ref{locexthm}. Then on the interval of existence, we have the following
bound for the growth of the $M_n$ norm of the solution.
\begin{eqnarray}\label{momfineq} \|\rho(\cdot,t)\|_{M_n} \leq C(1+t^{n/2})\|\rho_0\|_{M_n} \exp\left(C\int_0^t \left[\epsilon (1+r^{n/2})\|\rho(\cdot,r)\|_{L^\infty}^{q-1}
 + \right. \right. \\ \nonumber \left. \left. (r^{-1/2}+r^{(n-1)/2})(\|u(\cdot,r)\|_{C^1}+\|\rho(\cdot, r)\|_{L^\infty}
+\|\rho(\cdot,r)\|_{L^1})\right]\,dr\right). \end{eqnarray}
\end{lemma}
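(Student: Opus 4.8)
The plan is to run the Duhamel representation \eqref{duhamel} through the weighted heat estimates of Lemma~\ref{elem1}, together with their function-part analogues $\|e^{\tau\Delta}g\|_{L^1_w}\le C(1+\tau^{n/2})\|g\|_{L^1_w}$ and $\|\nabla e^{\tau\Delta}g\|_{L^1_w}\le C(\tau^{-1/2}+\tau^{(n-1)/2})\|g\|_{L^1_w}$ (which have the same elementary heat-kernel proof), and then to close the resulting linear integral inequality by a Gronwall argument. Writing $w(x)=1+|x|^n$ and $\|g\|_{L^1_w}=\int_{\Rm^d}|g|w\,dx$, I will estimate the function part $\int|\rho(t)|w$ and the gradient part $\int|\nabla\rho(t)|w$ of $\|\rho(t)\|_{M_n}$ separately, in each case commuting exactly one derivative onto the heat semigroup so that the source is differentiated at most once and the time singularity stays integrable. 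The linear term $\|e^{t\Delta}\rho_0\|_{M_n}\le C(1+t^{n/2})\|\rho_0\|_{M_n}$ produces the prefactor in \eqref{momfineq}; everything else comes from the three nonlinear pieces of $B_t(\rho)$.

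The reaction and advection terms are routine under the right derivative allocation. For the reaction term I commute $\nabla$ through the semigroup and differentiate $\rho^q$ directly, using the non-smoothing bounds: since $\|\rho^q\|_{L^1_w}\le\|\rho\|_{L^\infty}^{q-1}\|\rho\|_{L^1_w}$ and $\|\nabla\rho^q\|_{L^1_w}\le q\|\rho\|_{L^\infty}^{q-1}\int|\nabla\rho|w$, this contributes the factor $\epsilon(1+(t-s)^{n/2})\|\rho(s)\|_{L^\infty}^{q-1}\|\rho(s)\|_{M_n}$, matching the first bracketed term. For the advection term, which already has divergence form $\nabla\cdot(u\rho)$, I place the smoothing derivative on the semigroup; the source is then $u\rho$ for the function part and, using $\nabla\cdot u=0$, $u\cdot\nabla\rho$ for the gradient part, both controlled by $\|u\|_{C^1}\|\rho\|_{M_n}$ at the cost of the integrable factor $((t-s)^{-1/2}+(t-s)^{(n-1)/2})$.

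The chemotaxis term is where the real difficulty lies, and it is the step I expect to be the main obstacle. With one derivative placed on the semigroup, a naive estimate of the $M_n$ norm of the source $\rho\nabla\Delta^{-1}\rho$ forces the appearance of $\nabla^2\Delta^{-1}\rho$, a Calderón--Zygmund operator applied to $\rho$ that is \emph{not} bounded on $L^\infty$; a crude bound by $\|\rho\|_{L^\infty}$ therefore fails to reproduce the right-hand side of \eqref{momfineq}. The resolution I will use is to treat the function and gradient parts with different algebra. For the function part I move the divergence onto the semigroup, so the source is $\rho\nabla\Delta^{-1}\rho$ with no second potential derivative, and I only need $\|\rho\nabla\Delta^{-1}\rho\|_{L^1_w}\le\|\nabla\Delta^{-1}\rho\|_{L^\infty}\|\rho\|_{L^1_w}$, where the Newtonian-potential gradient obeys the elementary near/far bound $\|\nabla\Delta^{-1}\rho\|_{L^\infty}\le C(\|\rho\|_{L^\infty}+\|\rho\|_{L^1})$. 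For the gradient part I keep the divergence explicit and use the identity $\nabla\cdot(\rho\nabla\Delta^{-1}\rho)=\nabla\rho\cdot\nabla\Delta^{-1}\rho+\rho^2$, in which the dangerous term $\rho\,\Delta\Delta^{-1}\rho$ has collapsed to the harmless quadratic $\rho^2$; the source $\nabla\rho\cdot\nabla\Delta^{-1}\rho+\rho^2$ then has weighted $L^1$ norm bounded by $(\|\rho\|_{L^\infty}+\|\rho\|_{L^1})\|\rho\|_{M_n}$. Both parts thus cost only $((t-s)^{-1/2}+(t-s)^{(n-1)/2})(\|\rho\|_{L^\infty}+\|\rho\|_{L^1})$ and never invoke the unbounded operator $\nabla^2\Delta^{-1}$.

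Collecting the three estimates yields a linear inequality $y(t)\le C(1+t^{n/2})\|\rho_0\|_{M_n}+C\int_0^t K(t-s)\,b(s)\,y(s)\,ds$ for $y(t)=\|\rho(t)\|_{M_n}$, where $b(s)$ is the bracketed coefficient in \eqref{momfineq} and $K(t-s)$ carries the integrable singularity $(t-s)^{-1/2}$ together with the polynomial factors $(t-s)^{(n-1)/2}$ and $1+(t-s)^{n/2}$. The final step converts this into the exponential bound \eqref{momfineq} by a generalized (weakly singular) Gronwall lemma; the only thing to verify is integrability of $K$ in time, which holds since the exponent $1/2<1$, so no genuine obstruction arises here. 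I expect the bookkeeping of the weight $w$ (using sub-multiplicativity $w(x)\le Cw(x-y)w(y)$ inside the heat-kernel convolutions to generate precisely the time factors $(t-s)^{-1/2}+(t-s)^{(n-1)/2}$ and $1+(t-s)^{n/2}$) to be routine once the chemotaxis algebra above is in place.
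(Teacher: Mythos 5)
Your proposal is correct and follows essentially the same route as the paper: Duhamel's formula \eqref{duhamel}, the weighted heat-semigroup bounds of Lemma~\ref{elem1} with the smoothing derivative placed on the semigroup for the divergence-form terms, the product estimates $\|u\rho\|_{M_n}\le\|u\|_{C^1}\|\rho\|_{M_n}$, $\|\rho^q\|_{M_n}\le C\|\rho\|_{L^\infty}^{q-1}\|\rho\|_{M_n}$ and $\|\rho\nabla\Delta^{-1}\rho\|_{M_n}\le C(\|\rho\|_{L^\infty}+\|\rho\|_{L^1})\|\rho\|_{M_n}$, followed by a (weakly singular) Gronwall closure. Your explicit use of the identity $\nabla\cdot(\rho\nabla\Delta^{-1}\rho)=\nabla\rho\cdot\nabla\Delta^{-1}\rho+\rho^2$ to avoid the Calder\'on--Zygmund operator $\nabla^2\Delta^{-1}$ in the gradient part of the $M_n$ norm is precisely the device the paper employs in the proof of Lemma~\ref{Blem} (estimate \eqref{chemmom}), so this is the same argument, merely spelled out in greater detail.
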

\begin{proof}
Consider \eqref{duhamel}. By Lemma~\ref{elem1}, we have $\|e^{t\Delta}\rho_0\|_{M_n} \leq C(1+t^{n/2})\|\rho_0\|_{M_n}.$ Also, note that
$\|u \rho\|_{M_n} \leq \|u\|_{C^1} \|\rho\|_{M_n},$
$\|\rho^q\|_{M_n} \leq \|\rho\|_{L^\infty}^{q-1}\|\rho\|_{M_n},$ 
and \[ \|\rho \nabla \Delta^{-1} \rho \|_{M_n} \leq C\|\nabla \Delta^{-1}\rho\|_{L^\infty}\|\rho\|_{M_n} \leq C(\|\rho\|_{L^\infty}
+\|\rho\|_{L^1})\|\rho\|_{M_n}. \]
Therefore, applying these estimates and Lemma~\ref{elem1} to \eqref{duhamel}, we obtain
\begin{eqnarray}\label{mom12} \|\rho(\cdot,t)\|_{M_n} \leq C(1+t^{n/2})\|\rho_0\|_{M_n} + C\int_0^t \left( \left[\epsilon (1+r^{n/2})\|\rho(\cdot,r)\|_{L^\infty}^{q-1}
+ \right. \right. \\  \nonumber \left. \left. (r^{-1/2}+r^{(n-1)/2})(\|u(\cdot,r)\|_{C^1}+\|\rho(\cdot, r)\|_{L^\infty}
+\|\rho(\cdot,r)\|_{L^1})\right]\|\rho(\cdot,r)\|_{M_n}\right)\,dr. \end{eqnarray}
The inequality \eqref{mom12} implies the bound \eqref{momfineq}.
This upper bound is not optimal, but it is sufficient for our purpose. 
\end{proof}

Now we are ready to prove uniform in time bounds on the $H^s$ norm of the solution.
\begin{lemma}\label{unsoblem}
Let $\rho(x,t)$ be the local solution whose existence is guaranteed by Theorem~\ref{locexthm}.
Suppose that $\|\rho(\cdot,t)\|_{L^\infty}$ does not exceed $N_0$ for all $0 \leq t \leq T.$
Then
\begin{equation}\label{unsob}
\|\rho(\cdot,t)\|_{H^s} \leq {\rm max}\left(\|\rho_0\|_{H^s}, C(u,d,q,s,\chi,\epsilon,N_0)\right).
\end{equation}
\end{lemma}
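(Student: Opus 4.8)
The plan is to run a standard $H^s$ energy estimate for \eqref{chemnew}, using the $L^\infty$ bound $N_0$ of Lemma~\ref{linfty} to tame the reaction and chemotaxis nonlinearities, and then to upgrade this to a \emph{uniform}-in-time bound by playing the top-order heat dissipation against an interpolation inequality. First I would test \eqref{chemnew} against $\rho$ in $H^s$, i.e. estimate $\frac12\frac{d}{dt}\|\rho\|_{H^s}^2$. The diffusion produces the negative term $-\|\nabla\rho\|_{H^s}^2$, controlling in particular $\|\rho\|_{\dot H^{s+1}}^2$. The advection term is handled by a Kato--Ponce commutator estimate: since $\nabla\cdot u=0$, the top-order piece $\langle u\cdot\nabla D^\alpha\rho,D^\alpha\rho\rangle$ integrates to zero, while the commutator $[D^\alpha,u\cdot\nabla]\rho$ is bounded in $L^2$ by $C\|u\|_{C^s}\|\rho\|_{H^s}$ (here $s>d/2+1$ lets Sobolev embedding supply the needed $L^\infty$ control of low-order derivatives of $\rho$), so advection contributes at most $C\|u\|_{C^s}\|\rho\|_{H^s}^2$.

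For the reaction term, the leading contribution $-\epsilon q\int\rho^{q-1}|D^s\rho|^2$ is nonpositive (as $\rho\ge 0$), and the remaining Leibniz terms obey the Moser estimate $\|\rho^q\|_{H^s}\le C\|\rho\|_{L^\infty}^{q-1}\|\rho\|_{H^s}$, giving a bound $C\epsilon N_0^{q-1}\|\rho\|_{H^s}^2$. The chemotaxis term is the crux. Writing $V=\nabla\Delta^{-1}\rho$ and integrating by parts, the top-order piece is $\chi\langle D^s\nabla\cdot(\rho V),D^s\rho\rangle=-\chi\langle D^s(\rho V),\nabla D^s\rho\rangle$, which I bound by $\chi\|\rho V\|_{\dot H^s}\|\rho\|_{\dot H^{s+1}}$. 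The product estimate $\|\rho V\|_{\dot H^s}\le C(\|\rho\|_{L^\infty}\|V\|_{\dot H^s}+\|V\|_{L^\infty}\|\rho\|_{\dot H^s})$, together with the identity $\|V\|_{\dot H^s}=\|\rho\|_{\dot H^{s-1}}$ (the operator $\nabla\Delta^{-1}$ has symbol of size $|\xi|^{-1}$) and the bound $\|V\|_{L^\infty}\le C(\|\rho\|_{L^1}+\|\rho\|_{L^\infty})$ already used in Lemma~\ref{moments}, yields $\|\rho V\|_{\dot H^s}\le C(N_0+\|\rho\|_{L^1})\|\rho\|_{H^s}$. Since $\rho\ge 0$, the mass $\|\rho\|_{L^1}$ is nonincreasing (all the transport, diffusion and chemotaxis terms are divergences), hence $\|\rho\|_{L^1}\le\|\rho_0\|_{L^1}$. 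A Young inequality then absorbs half of $\|\rho\|_{\dot H^{s+1}}^2$ into the diffusion, leaving a contribution $\le C\chi^2(N_0+\|\rho_0\|_{L^1})^2\|\rho\|_{H^s}^2$.

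Collecting the terms produces a differential inequality $\frac{d}{dt}\|\rho\|_{H^s}^2\le-\|\rho\|_{\dot H^{s+1}}^2+A\|\rho\|_{H^s}^2$ with $A=A(u,d,q,s,\chi,\epsilon,N_0,\|\rho_0\|_{L^1})$. Naively this only gives exponential growth in time; the final and essential step is to convert the dissipation into genuine superlinear damping. Since $\|\rho\|_{L^2}^2\le\|\rho\|_{L^\infty}\|\rho\|_{L^1}\le N_0\|\rho_0\|_{L^1}$ is bounded uniformly in time, set $B^2=N_0\|\rho_0\|_{L^1}$; the Gagliardo--Nirenberg interpolation $\|\rho\|_{\dot H^s}\le\|\rho\|_{L^2}^{1/(s+1)}\|\rho\|_{\dot H^{s+1}}^{s/(s+1)}$ gives $\|\rho\|_{\dot H^{s+1}}^2\ge\|\rho\|_{\dot H^s}^{2+2/s}B^{-2/s}$. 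Writing $y=\|\rho\|_{H^s}^2$ and using that the $L^2$ and $\dot H^s$ parts are comparable to $y$, the inequality becomes $y'\le-c\,y^{1+1/s}B^{-2/s}+Ay$, whose right-hand side is negative once $y$ exceeds the equilibrium level $y_*=A^sB^2/c^s$. Hence $y(t)\le\max\bigl(y(0),y_*\bigr)$ for all $t$, i.e. $\|\rho(\cdot,t)\|_{H^s}\le\max\bigl(\|\rho_0\|_{H^s},\sqrt{y_*}\bigr)$, which is precisely \eqref{unsob} (with the constant naturally also depending on $\|\rho_0\|_{L^1}\le\|\rho_0\|_{K_{s,n}}$). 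I expect the main obstacle to be exactly this last superlinear-damping argument: the chemotactic forcing by itself only yields time-growing $H^s$ estimates, and it is the combination of the top-order heat dissipation with the uniform $L^\infty\cap L^1$ (hence $L^2$) control that pins the $H^s$ norm to a bound valid for all time.
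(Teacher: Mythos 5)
Your proof is correct and follows the same overall strategy as the paper's: an $H^s$ energy estimate in which the nonlinearities are tamed by the $L^\infty$ bound $N_0$, followed by an interpolation that converts the top-order heat dissipation into superlinear damping of $\|\rho\|_{H^s}^2$, which is exactly how the paper obtains a bound uniform in time rather than exponentially growing. The differences are in the details. For the chemotaxis term the paper expands $\Delta^{s/2}\nabla\cdot(\rho\nabla\Delta^{-1}\rho)$ by Leibniz, kills the one genuinely top-order term $\int \Delta^{s/2}\rho\,\nabla\Delta^{s/2}\rho\cdot\nabla\Delta^{-1}\rho\,dx$ by an integration by parts (it equals $-\tfrac12\int|\Delta^{s/2}\rho|^2\rho\,dx$), and bounds the rest via Riesz transforms and Gagliardo--Nirenberg, ending with $C\|\rho\|_{L^\infty}\|\rho\|_{H^s}^2$ and \emph{no} loss of dissipation; you instead pay half of $\|\rho\|_{\dot H^{s+1}}^2$ through Young's inequality and invoke $\|\nabla\Delta^{-1}\rho\|_{L^\infty}\lesssim\|\rho\|_{L^1}+\|\rho\|_{L^\infty}$, which is equally valid (the mass is indeed nonincreasing) but makes your constant quadratic in $\chi$ and dependent on $\|\rho_0\|_{L^1}$. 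Relatedly, your final interpolation is between $L^2$ and $\dot H^{s+1}$ using $\|\rho\|_{L^2}^2\le N_0\|\rho_0\|_{L^1}$, whereas the paper interpolates between $L^\infty$ and $H^{s+1}$ (giving the exponent $2+\tfrac{2}{s-d/2}$) and so keeps the constant in \eqref{unsob} free of $\rho_0$, as stated in the lemma. Your extra dependence on $\|\rho_0\|_{L^1}$ is a cosmetic deviation from the statement and harmless for the continuation argument in Corollary~\ref{hscon}, but it is worth noting that the paper's version of the constant does not carry it.
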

\begin{proof}
Consider for simplicity the case where $s$ is even (the odd case is very similar).
Apply $\Delta^{s/2}$ to \eqref{chemnew}, multiply by $\Delta^{s/2}\rho(x,t)$ and integrate.
We obtain
\begin{eqnarray}\label{sobdyn}
&&\!\!\!\!\!\!\!\!\!\!\!\!\!\!\!\!\!\!\!\!\!\!\!\!
\frac12 \partial_t \|\rho\|_{H^s}^2 = \int\limits_{\Rm^d}[\Delta^{s/2} (u\cdot \nabla)\rho]( \Delta^{s/2}\rho)\,dx
- \epsilon \int\limits_{\Rm^d} (\Delta^{s/2}\rho^q)( \Delta^{s/2} \rho)\,dx\nonumber
-\|\rho\|^2_{H^{s+1}} \\
 && + \chi\int_{\Rm^d}[\nabla \cdot\Delta^{s/2} (\rho \nabla \Delta^{-1}\rho) ](\Delta^{s/2}\rho)\,dx.
\end{eqnarray}
Using $\nabla \cdot u =0,$ we obtain
\[
\left| \int_{\Rm^d}[\Delta^{s/2} ((u\cdot \nabla)\rho)]( \Delta^{s/2}\rho)\,dx \right| \leq C\|u\|_{C^s}\|\rho\|^2_{H^s}.
\]
Next, the second integral on the right hand side of \eqref{sobdyn} can be written as a sum of a finite number of terms of the form
$\int_{\Rm^d} D^s \rho \prod_{i=1}^q D^{s_i}\rho \,dx,$ $s_1+\dots+s_q = s,$ $s_i \geq 0.$ Here $D^l$ denotes any partial derivative
operator of the $l$th order. By H\"older's inequality, we have
\[ \left| \int_{\Rm^d} D^s \rho \prod_{i=1}^q D^{s_i}\rho \,dx \right| \leq \|D^s \rho\|_{L^2} \prod_{i=1}^q \|D^{s_i}\rho\|_{p_i}, \]
$\sum_{i=1}^q p_i^{-1} =1/2$. Take $p_i = 2s/s_i,$ and recall the Gagliardo-Nirenberg inequality (\cite{Gag,Nir,Maz})
\begin{equation}\label{gn1} \|D^{s_i}\rho\|_{L^{2s/s_i}} \leq C \|\rho\|_{L^\infty}^{1-\frac{s_i}{s}}\|D^s\rho\|_{L^2}^{\frac{s_i}{s}}. \end{equation}
Then we get
\[ \left| \int_{\Rm^d} \Delta^{s/2}\rho^q \Delta^{s/2} \rho\,dx \right| \leq C \|\rho\|_{L^\infty}^{q-1}\|\rho\|_{H^s}^2. \]
Finally, we claim that the third integral on the right hand side of \eqref{sobdyn} can be written as a sum of a finite number of terms
of the form $\int_{\Rm^d} D^s \rho D^k \rho D^{s+2-k} \Delta^{-1}\rho\,dx,$ where $k = 0,\dots,s.$
The only term one gets from the direct
differentiation that does not appear to be of this form is $\int_{\Rm^d} \Delta^{s/2}\rho \nabla \Delta^{s/2} \rho \nabla \Delta^{-1}\rho\,dx.$
However, integrating by parts, we find that this term is equal to $-\frac12
\int_{\Rm^d} |\Delta^{s/2}\rho|^2 \rho\,dx.$ Now
\[ \left| \int_{\Rm^d} D^s \rho D^k \rho D^{s+2-k} \Delta^{-1}\rho\,dx \right| \leq C \|D^s \rho\|_{L^2} \|D^k \rho\|_{L^{p_1}} \|D^{s-k}\rho\|_{L^{p_2}}, \]
$p_1^{-1}+p_2^{-1}=1/2,$ $p_2 < \infty.$ Here we used boundedness of Riesz transforms on $L^{p_2},$ $p_2 <\infty.$ Set $p_1 = \frac{2s}{k},$
$p_2= \frac{2s}{s-k}.$ By Gagliardo-Nirenberg inequality \eqref{gn1} with $s_i=k,s-k,$ we get
\[ \left| \int_{\Rm^d} D^s \rho D^k \rho D^{s+2-k} \Delta^{-1}\rho\,dx \right| \leq C \|\rho\|_{L^\infty} \|\rho\|_{H^s}^2. \]
Putting all the estimates into \eqref{sobdyn}, we find that
\begin{equation}\label{disn} \frac12 \partial_t \|\rho\|^2_{H^s} \leq C \|\rho\|_{L^\infty} \|\rho\|^2_{H^s} - \|\rho\|^2_{H^{s+1}} \leq C \|\rho\|_{L^\infty} \|\rho\|^2_{H^s} -
\|\rho\|_{H^s}^{2+\frac{2}{s-d/2}}\|\rho\|_{L^\infty}^{-\frac{2}{s-d/2}}. \end{equation}
 We used another Gagliardo-Nirenberg inequality in the last step.
The differential inequality \eqref{disn} implies the result of the lemma.
\end{proof}
Given the $M_n$ and $H^s$ norm bounds we proved, the solution can now be continued globally, completing the proof of Theorem~\ref{globex}.

\section{Appendix II: The Gagliardo-Nirenberg inequality with $p<1$}\label{gnp}

Twice in the paper, we needed to apply Gagliardo-Nirenberg inequalities with one of the summation exponents less than one
(see \eqref{nonstgn}, \eqref{nonstgn1}). Such inequalities are certainly known and can be found in mathematical literature (see e.g.
encyclopedic \cite{Maz}). However, it was not easy for us to find a reference with a transparent self-contained proof, and for the sake
of completeness we provide a sketch of an elegant and simple proof here.
The idea of this argument has been communicated to us by Fedor Nazarov.
We will prove a slightly more general inequality containing both \eqref{nonstgn} and \eqref{nonstgn1}.

\begin{theorem}\label{nonstgn0}
Let $v \in C_0^\infty(\Rm^d),$ $d \geq 2.$ Then
\begin{equation}\label{gengn}
\|v\|_{L^q} \leq C(q,d)\|\nabla v\|_{L^2}^a \|v\|_{L^r}^{1-a}, \,\,\, a=\frac{\frac{1}{r}-\frac{1}{q}}{\frac{1}{d}-\frac12+\frac{1}{r}}.
\end{equation}
The inequality holds for all $q,r>0$ such that $q>r$ and $\frac{1}{d}-\frac12 + \frac{1}{r}>0.$
\end{theorem}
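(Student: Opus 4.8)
The plan is to prove the inequality \eqref{gengn} by the standard scaling-plus-interpolation strategy, reducing it to a single well-understood endpoint case. First I would verify the exponent $a$ by dimensional analysis: under the rescaling $v_\lambda(x) = v(\lambda x)$, both sides of \eqref{gengn} must scale the same way in $\lambda$, which forces precisely the stated value $a = \big(\tfrac1r - \tfrac1q\big)\big/\big(\tfrac1d - \tfrac12 + \tfrac1r\big)$. This confirms the inequality is scale-invariant, so it suffices to prove it for functions normalized by a convenient condition (say $\|\nabla v\|_{L^2} = 1$ or $\|v\|_{L^r}=1$); the general case follows by applying the normalized inequality to $v_\lambda$ and optimizing over $\lambda$.

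The heart of the argument, following Nazarov's idea, is to handle the regime $r<1$ — where $L^r$ is not a norm and classical Sobolev embedding does not directly apply — by a pointwise-in-level (layer cake) decomposition. The key step is to write $v = \sum_k v_k$ where $v_k$ localizes $|v|$ to the dyadic level set $\{2^k \le |v| < 2^{k+1}\}$, or equivalently to truncate $v$ at dyadic heights. On each such piece the function is comparable to a single scale $2^k$, so the problematic exponent $r<1$ can be traded against an $L^\infty$-type bound on that level, effectively converting the small exponent into a large one where standard Gagliardo-Nirenberg (with both exponents $\ge 1$) is available. Concretely, I would apply the classical Sobolev/Gagliardo-Nirenberg inequality to each truncated piece $v_k$, obtaining a bound of the form $\|v_k\|_{L^q} \le C \|\nabla v_k\|_{L^2}^{a}\|v_k\|_{L^{r_0}}^{1-a}$ for some auxiliary $r_0 \ge 1$, and then reassemble the dyadic pieces.

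The main obstacle I anticipate is the reassembly step: summing the dyadic estimates $\|v_k\|_{L^q}$ back up to control $\|v\|_{L^q}$, while simultaneously controlling $\sum_k \|\nabla v_k\|_{L^2}^2$ by $\|\nabla v\|_{L^2}^2$ and $\sum_k (\text{level contribution})$ by $\|v\|_{L^r}^r$. This requires a careful Hölder-type interpolation across the dyadic index $k$ so that the geometric weights $2^{k}$ combine with the correct powers; the delicate point is that the gradient is supported where $|v|$ crosses between levels, so one must argue that $\sum_k \|\nabla v_k\|_{L^2}^2 \le \|\nabla v\|_{L^2}^2$ (which holds because the gradients of the truncations have essentially disjoint supports in the co-area sense). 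Once the three sums are matched against the right-hand side of \eqref{gengn} via a single application of Hölder's inequality in $k$ with exponents dictated by $a$, the theorem follows. The condition $\tfrac1d - \tfrac12 + \tfrac1r > 0$ is exactly what guarantees $a \in (0,1)$ and makes the interpolation exponents admissible, so I would check at the outset that it ensures both the scaling relation is solvable and the dyadic sum converges.
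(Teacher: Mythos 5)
Your overall architecture --- dyadic decomposition of the range of $v$, disjointness of the gradients of the truncations, and a discrete H\"older inequality in the dyadic index --- belongs to the same family as the paper's argument. The paper parametrizes the decomposition by \emph{measure} (sets $A_k$ with $|A_k|=2^{kd}$ bounded by level sets, with the isoperimetric inequality fed in through the co-area formula), while you parametrize by \emph{height} ($v_k=\min((|v|-2^k)_+,2^k)$, black-boxing Sobolev on each truncation); this is the classical Maz'ya truncation route and it does work, reducing in the end to essentially the same discrete inequality. The scaling discussion at the start is correct but buys nothing: the decomposition argument is already scale-invariant and no normalization is ever used.

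The gap is in the reassembly, exactly where you flagged the difficulty, and ``a single application of H\"older's inequality in $k$'' does not close it as stated. Set $\mu_k=|\{|v|>2^k\}|$. Your three sums are $\|v\|_{L^q}^q\le C\sum_k 2^{kq}\mu_k$, $\|v\|_{L^r}^r\ge c\sum_k 2^{kr}\mu_k$, and (Sobolev on each truncation, $d\ge3$) $\|\nabla v\|_{L^2}^2\ge\sum_k\|\nabla v_k\|_{L^2}^2\ge c\sum_k 2^{2k}\mu_{k}^{1-2/d}$ after an index shift. A direct H\"older in $k$ cannot match these: writing $2^{kq}\mu_k=\bigl(2^{2k}\mu_k^{1-2/d}\bigr)^{\theta}\bigl(2^{kr}\mu_k\bigr)^{1-\theta}$ forces $\bigl(1-\tfrac2d\bigr)\theta+(1-\theta)=1$, i.e.\ $\theta=0$. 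You need one more elementary ingredient first: the superadditivity $\sum_k b_k\ge\bigl(\sum_k b_k^{t}\bigr)^{1/t}$ with $t=d/(d-2)\ge1$ applied to $b_k=2^{2k}\mu_k^{1-2/d}$, which upgrades the gradient sum to $\bigl(\sum_k 2^{2^*k}\mu_k\bigr)^{2/2^*}$ with $2^*=2d/(d-2)$; only then do all three sums carry the common weight $\mu_k$, and a genuine single H\"older closes, the duality condition $\tfrac{qa}{2^*}+\tfrac{q(1-a)}{r}=1$ reproducing exactly the stated $a$. This is precisely the step $\sum_k c_k^s\ge\bigl(\sum_k c_k\bigr)^s$, $0<s\le1$, in the paper's proof of \eqref{gnfinhold}. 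Two further points: in $d=2$ the Sobolev step degenerates ($2^*=\infty$) and must be replaced by a separate elementary argument, as the paper also notes; and the hypotheses $q>r$ and $\tfrac1d-\tfrac12+\tfrac1r>0$ give $a>0$ but not $a\le1$ --- the H\"older step (yours and the paper's alike) additionally needs $q\le 2d/(d-2)$ when $d\ge3$, a constraint satisfied in all the paper's applications.
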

\begin{proof}
Let $A_k$ denote regions in $\Rm^d$ such that $|A_k|=2^{kd},$ $k \in \Zm,$ the boundary of $A_k$ coincides with a level set of $|v(x)| \equiv v_{k+1},$ and
$|v(x)| \geq v_{k+1}$ inside $A_k.$ 
Then
\[ \|v\|_{L^q}^q \leq \sum_{k \in \Zm} |A_k| v_k^q. \]
Fix a small $\delta>0.$ Let us call $k$ "important" if $v_{k+1} < (1-\delta)v_k.$
Denote the set of all important $k$ by $I.$ Observe that
\[ \sum_{k \in \Zm} |A_k| v_k^q \leq C(\delta) \sum_{k \in I} |A_k| v_k^q. \]
Indeed, a sequence of not important consequent $k$ contributes at most $\sum_{l >0} 2^{-dl}(1-\delta)^{-ql}|A_{k+1}|v_{k+1}^q$
compared to the contribution
$|A_{k+1}|v_{k+1}^q$ of the single next term.

For the $L^r$ norm, we have the estimate
\[ \|v\|_{L^r}^{(1-a)q} \geq C \left( \sum_{k \in \Zm} |A_k| v_k^r \right)^{(1-a)q/r}. \]
For the gradient term, by the co-area formula (see e.g. \cite{EG}) we have
\[ \int_{v_{k+1} \leq v(x) \leq v_k} |\nabla v| \,dx = \int_{v_k}^{v_{k+1}} \cH^{d-1}(x:\,|v(x)|=s)\,ds, \]
where $\cH^{d-1}$ is the $d-1$-dimensional Hausdorff measure. By the isoperimetric inequality,
\[ \cH^{d-1}(x:\,|v(x)|=s) \geq C |A_k|^{1-\frac{1}{d}} \]
if $s \geq v_{k+1}$ (see e.g. \cite{EG}). Therefore,
\[  \int_{v_{k+1} \leq v(x) \leq v_k} |\nabla v| \,dx \geq C |A_k|^{1-\frac{1}{d}}(v_k - v_{k+1}). \]
By Cauchy-Schwartz,
\[  \int_{v_{k+1} \leq v(x) \leq v_k} |\nabla v|^2 \,dx \geq \frac{1}{|A_k|}\left(\int_{v_{k+1} \leq v(x) \leq v_k} |\nabla v| \,dx\right)^2 \geq
C|A_k|^{1-\frac{2}{d}}(v_k-v_{k+1})^2. \]
Therefore,
\[ \int_{\Rm^d} |\nabla v|^2\,dx \geq C\sum_{k \in \Zm} (v_k -v_{k+1})^2 |A_k|^{1-\frac{2}{d}} \geq C\delta^2 \sum_{k \in I} v_k^2 |A_k|^{1-\frac{2}{d}}. \]

Thus, it remains to prove that
\begin{equation}\label{gnfinhold}
\sum_{k \in I} |A_k| v_k^q \leq C\left( \sum_{k \in \Zm} |A_k| v_k^r \right)^{(1-a)q/r} \left(\sum_{k \in I} v_k^2 |A_k|^{1-\frac{2}{d}}\right)^{aq/2}.
\end{equation}
Observe that, if $d\ge 3$, then we have
\[ \left(\sum_{k \in I} v_k^2 |A_k|^{1-\frac{2}{d}}\right)^{aq/2} \geq \left( \sum_{k \in I} v_k^{\frac{2d}{d-2}}|A_k| \right)^\frac{aq(d-2)}{2d} \]
(since $\sum_k b_k^s \geq (\sum_k b_k)^s$ for $b_k \geq 0,$ $0 < s \leq 1$).
Write
\begin{equation}\label{brack} |A_k| v_k^q = \left[ |A_k|^{(1-a)q/r} v_k^{(1-a)q} \right] \left[ v_k^{aq} |A_k|^{\frac{aq(d-2)}{2d}} \right]. \end{equation}
Apply H\"older inequality on the left hand side of \eqref{gnfinhold}, rasing the first term in \eqref{brack} to the power $\frac{r}{q(1-a)},$ and the
second term to the power $\frac{2d}{aq(d-2)}.$ Notice that the inverses of these powers sum to one due to the definition of $a$ in \eqref{gengn}.
The resulting inequality coincides with \eqref{gnfinhold}. Finally, when $d=2$, we have $a=1-q/r$, and (\ref{gnfinhold}) follows
from a more elementary consideration.
\end{proof}

\noindent {\bf Acknowledgement.} \rm AK is grateful to Stanford University for its hospitality
in May 2010, when this work was initiated. AK acknowledges support of the NSF grant DMS-0653813, and thanks
the Institute for Mathematics and Its Applications for stimulating atmosphere at the Workshop on Transport and Mixing in Complex and
Turbulent Flows in March 2010, where he learned about coral spawning from the talk of Prof. Jeffrey Weiss.
LR is supported in part by NSF grant DMS-0908507. We are grateful to the referees for useful comments and corrections.  

\end{document}